\newtheorem{theorem}[equation]{Theorem}
\newtheorem{conj}[equation]{Conjecture}
\theoremstyle{remark}
\theoremstyle{definition}
\newtheorem{defn}[equation]{Definition}
\numberwithin{equation}{section}
\newcommand{\Span}{\operatorname{span}}
\newcommand{\Id}{\operatorname{Id}}
\definecolor{green(html/cssgreen)}{rgb}{0.0, 0.5, 0.0}
\newcommand{\Real}{\mathbb{R}}
\newcommand{\Com}{\mathbb{C}}
\newcommand{\ademi}{\mbox{$\frac{a}{2}$}}
\newcommand{\bdemi}{\mbox{$\frac{b}{2}$}}
\newcommand{\der}{\mathrm{d}}
\begin{document}
\title{Numerical optimisation of Dirac eigenvalues}

\author[Antunes, Bento and Krej\v{c}i\v{r}\'{\i}k]{
Pedro R.~S. Antunes, Francisco Bento and David Krej\v{c}i\v{r}\'{\i}k}

\address{Department of Mathematics and Group of Mathematical Physics, Instituto Superior T\'{e}cnico, Av.~Rovisco~Pais~1, 1049-001 Lisboa, Portugal} \email{prsantunes@tecnico.ulisboa.pt}

\address{Department of Mathematics, Instituto Superior T\'{e}cnico, Av.~Rovisco~Pais~1, 1049-001 Lisboa, Portugal} \email{francisco.a.bento@tecnico.ulisboa.pt}

\address{Department of Mathematics, Faculty of Nuclear Sciences and 
Physical Engineering, Czech Technical University in Prague, 
Trojanova 13, 12000 Prague 2, Czech Republic}
\email{David.Krejcirik@fjfi.cvut.cz}

\date{March 27, 2024}

\begin{abstract}
Motivated by relativistic materials,
we develop a numerical scheme to support existing 
or state new conjectures in the spectral optimisation
of eigenvalues of the Dirac operator,
subject to infinite-mass boundary conditions.
We study the optimality 
of the regular polygon (respectively, disk)
among all polygons of a given number of sides 
(respectively, arbitrary sets),
subject to area or perimeter constraints.
We consider the three lowest positive eigenvalues and their ratios.
Roughly, we find results analogous 
to known or expected for the Dirichlet Laplacian,
except for the third eigenvalue which does not need
to be minimised by the regular polygon 
(respectively, the disk) for all masses.
In addition to the numerical results, 
a new, mass-dependent upper bound to the lowest  
eigenvalue in rectangles is proved
and its extension to arbitrary quadrilaterals is conjectured. 
\end{abstract}

\maketitle

\vspace{-6ex}

\section{Introduction}
%
\subsection{Prologue}
The ancient Greeks had no doubt that the disk has the largest 
area among all domains of fixed perimeter.
Simply because they could have checked it by measuring 
on a huge number of samples.
The conviction is reflected in the celebrated account 
of the legendary Dido founding Carthage almost a millennium~BC.
Moreover, the Greeks pretty much solved the isoperimetric problem, 
by their standards,
when Zenodorus proved that the disk has greater area than any
polygon with the same perimeter.
However, a rigorous proof for all domains
was done only by Steiner, by his standards,
and completed by Weierstrass in 1879. 
The almost three millennia were necessary to develop 
new tools in mathematics to prove 
the isoperimetric inequality~\cite{Blasjo_2005}.

The objective of the present paper is to give an account 
on a problem in contemporary mathematics which puts us
in a situation much similar to that of the ancient Greeks.
By developing a robust numerical machinery, 
there is no doubt that a spectral isoperimetric problem holds true.
However, no rigorous proof is available 
because of a fundamental lack of appropriate tools in mathematics. 

\subsection{The problem}
Motivated by recent advances in nanotechnologies,
there has been a surge of interest in
relativistic materials such as graphene.
Mathematically, the challenge consists in replacing 
the traditional scalar Laplacian 
$-\Delta = -\partial_1^2-\partial_2^2$
on a planar open set~$\Omega$
by the Dirac operator 
\begin{equation}\label{operator}
  H =
  \begin{bmatrix}
    m & -i(\partial_1 - i \partial_2)\\
    -i(\partial_1 + i \partial_2) & -m
  \end{bmatrix}
  \qquad\mbox{in}\qquad
  L^2(\Omega;\Com^2)
  \,.
\end{equation}
Here $m \geq 0$ denotes the (effective) mass
of the \mbox{(quasi-)}particle modelled.
Since~$H$ is matrix-valued
and unbounded both from above and below, 
the powerful mathematical tools such as 
variational methods and the maximum principle are not directly available. 
Moreover, non-trivial geometries  
additionally lead to exotic boundary conditions, 
among these the distinguished infinite-mass constraint  
\cite{Arrizibalaga-LeTreust-Raymond_2017,
Arrizibalaga-LeTreust-Mas-Raymond_2019,
Barbaroux-Cornean-LeTreust-Stockmeyer_2019}
\begin{equation}\label{bc}
  u_2 = i (n_1 + i n_2)u_1
  \qquad \mbox{on} \qquad
  \partial\Omega
  \,,
\end{equation}
which is commonly accepted as the relativistic 
counterpart of Dirichlet boundary conditions 
for the Laplacian.
Here 
$ 
  u =
  \begin{bsmallmatrix} u_1\\ u_2 \end{bsmallmatrix}
$
is a function from the domain of~$H$ 
and \(n = (n_1, n_2)\) 
stands for the outward unit normal of~\(\Omega\).
Henceforth, we assume that~$\Omega$ is a bounded Lipschitz open set, 
to have~$n$ well defined
and~$H$ self-adjoint~\cite{Behrndt}. 

The lack of mathematical apparatus leads to
unresolved problems in spectral geometry of optimal shapes.
In particular, the following notoriously well-known facts
for the Dirichlet Laplacian remain a mystery for 
the Dirac operator, subject to infinite-mass boundary conditions:
\begin{description}[align=right,labelwidth=3.5em]
\item[\textbf{(Q1)}]\label{Q1}
Among all sets of fixed area, 
does the disk minimises the first eigenvalue?
\item[\textbf{(Q1')}]\label{Q1_p}
Among all sets of fixed perimeter, 
does the disk minimises the first eigenvalue?
\item[\textbf{(Q2)}]\label{Q2}
Among all quadrilaterals of fixed area, 
does the square minimises the first eigenvalue?
\item[\textbf{(Q2')}]\label{Q2_p}
Among all quadrilaterals of fixed perimeter, 
does the square minimises the first eigenvalue?
\end{description}
Here the eigenvalues are counted starting from
the first \emph{positive} eigenvalue of~$H$.
This is justified by the fact that the spectrum of~$H$ 
is symmetric with respect to zero
and zero is never an eigenvalue.

In the non-relativistic setting, 
the validity of~\hyperref[Q1]{\textbf{(Q1)}} is known as the Faber--Krahn inequality
\cite{Faber_1923,Krahn_1924},
while its discrete analogue~\hyperref[Q2]{\textbf{(Q2)}} is due to P\'{o}lya and Szeg\H{o} 
\cite{pol-szego}.
The isoperimetric constraints are considered to be simpler;
indeed, the validity of \hyperref[Q1_p]{\textbf{(Q1')}} and \hyperref[Q2_p]{\textbf{(Q2')}} follow from \hyperref[Q1]{\textbf{(Q1)}} and \hyperref[Q2]{\textbf{(Q2)}}
by scaling, respectively
(and, historically, Courant~\cite{Courant_1918} established~\hyperref[Q1_p]{\textbf{(Q1')}} 
for the Dirichlet Laplacian before~\hyperref[Q1]{\textbf{(Q1)}} of Faber and Krahn).

In the present relativistic setting,
the validity of~\hyperref[Q1]{\textbf{(Q1)}} for massless particles (i.e.\ $m=0$)
is explicitly stated as a conjecture in~\cite{AKO}
(see also \cite{krejcirik_larson_lotoreichik_2019}).
The conjectures about the validity of~\hyperref[Q1]{\textbf{(Q1)}} and \hyperref[Q1_p]{\textbf{(Q1')}}
for all masses $m \geq 0$, are explicitly stated in~\cite{briet-krej}. 
For recent attempts to prove the validity of~\hyperref[Q1]{\textbf{(Q1)}}, 
see \cite{Benguria-Fournais-Stockmeyer-Bosch_2017,
Lotoreichik-Ourmieres_2019,AKO,
Arrizabalaga-Mas-Sanz-Perela-Vega_2023,
Behrndt-Frymark-Holzmann-Stelzer-Landauer}. 
Since the isochoric result~\hyperref[Q1]{\textbf{(Q1)}} is not available,
\hyperref[Q1_p]{\textbf{(Q1')}} is independently interesting.

In the last named reference~\cite{briet-krej}, 
the conjectures about the validity of \hyperref[Q2]{\textbf{(Q2)}} and~\hyperref[Q2_p]{\textbf{(Q2')}}
are explicitly stated for rectangles
and partial results are established.
The restriction to rectangles does not simplify the problem, 
for the eigenvalue equation cannot be solved by separation of variables.
It is frustrating that such an illusively simple question 
does not appear to be answerable by current mathematical tools.
Note that the question for polygons of more sides than four
remains open in the non-relativistic setting, too
\cite{pol-szego,Bogosel-Bucur_2023,Indrei}.

\subsection{The results}
Because of the fundamental lack of rigorous tools to tackle
the questions \hyperref[Q1]{\textbf{(Q1)}}, \hyperref[Q1_p]{\textbf{(Q1')}}, \hyperref[Q2]{\textbf{(Q2)}} and \hyperref[Q2_p]{\textbf{(Q2')}} for the Dirac operator,
in this paper we overtake the pragmatical attitude 
of the ancient Greeks and study the problems numerically.
In fact, the affirmative answer to question~\hyperref[Q1]{\textbf{(Q1)}} 
was numerically supported already in~\cite{AKO}. 
At the same time, as mentioned above, the validity of \hyperref[Q1_p]{\textbf{(Q1')}} and \hyperref[Q2_p]{\textbf{(Q2')}} 
follow as a consequence of \hyperref[Q1]{\textbf{(Q1)}} and \hyperref[Q2]{\textbf{(Q2)}}, respectively. 
Therefore, we focus on question~\hyperref[Q2]{\textbf{(Q2)}} for quadrilaterals.

The numerical machinery that we employ is  
the Method of Fundamental Solutions.
This is a \emph{meshless} technique used to approximate solutions 
of elliptic partial differential equations
using the fundamental solutions.
Previously, this method has demonstrated to be effective
in studying the spectrum of elliptic operators and solving inverse problems, 
see, e.g., \cite{alves2005method,barnett2008stability,alves2013method}.

Our main numerical results confirm the validity of~\hyperref[Q2]{\textbf{(Q2)}} for quadrilaterals. 
In particular, we provide affirmative answers to 
the conjectures raised in~\cite{briet-krej} for rectangles.
At the same time, we establish a new analytic upper bound 
to the first eigenvalue and numerically test it.
This is an improvement to the mass-independent 
upper bound of~\cite{briet-krej}.
It turns out that, numerically, the improved bound 
extends to general quadrilaterals.
Moreover, we provide a numerical support for the validity 
of the relativistic P\'{o}lya--Szeg\H{o} inequality in general,
at least for octagons and all the polygons of less sides.

We also investigate higher eigenvalues. 
In agreement with the known fact for the Dirichlet Laplacian, 
we numerically show that the second eigenvalue 
of the Dirac operator is minimised by two disjoint identical disks,
among all sets of fixed area. 
What is more, and still agreement with the non-relativistic setting,
it is numerically argued that the ratio of the second 
to the first eigenvalue is minimised by the disk,
obtaining in this way a relativistic analogue of 
the celebrated Ashbaugh--Benguria result~\cite{ash-ben}. 

In agreement with the known fact for the Dirichlet Laplacian, 
we numerically show that the ratio of the third to the first eigenvalue
is not maximised by the disk; 
instead, a familiar peanut-like shape is found
\cite{Ant,Levitin,osting}.
As for the third eigenvalue by itself, 
our numerical experiments show that 
it is not minimised by the disk for every~$m$,
among all sets of fixed area. 
This is a novelty with respect to the Dirichlet Laplacian,
for which it is expected that the third eigenvalue 
is minimised by the disk \cite[Sec.~5.3]{henrot2006extremum}. 
We also numerically show that the third eigenvalue is not minimised 
by the square for every~$m$, 
among all rectangles of fixed area.

\subsection{The structure}
The paper is organised as follows.
Our numerical scheme is developed in Section~\ref{Sec.method}.
The results for rectangles (as well as more general quadrilaterals),
triangles, general polygons and domains are presented in
Sections~\ref{Sec.rect}, \ref{Sec.tri}, \ref{Sec.poly}
and~\ref{Sec.general}, respectively.

\subsection{Epilogue}
The conclusions of our numerical optimisations of Dirac eigenvalues 
are summarised in new relativistic conjectures stated within the text.
We hope that their proofs will be achieved 
within a time period significantly less than three millennia. 
However, it is important to keep in mind that
it is also possible that some of the conjectures do not hold,
for they are merely based on numerics.

\section{The numerical method}\label{Sec.method}
%
\subsection{Preliminaries}
First of all, let us comment on the definition of~$H$
in more detail. 
Unless otherwise stated, the mass~$m$ is assumed to be
an arbitrary non-negative number.
Given a planar open connected set $\Omega$, 
the Hilbert space is $L^2(\Omega;\Com^2)$,
the space of two-tuples (spinors)
$ 
  u =
  \begin{bsmallmatrix} u_1\\ u_2 \end{bsmallmatrix}
$
composed of complex-valued square-integrable functions $u_1,u_2$.
The action of~$H$ in~\eqref{operator} should be 
understood in the sense of distributions.
The boundary conditions~\eqref{bc}
are incorporated through the operator domain 
on which~$H$ is self-adjoint.
If~$\Omega$ is merely a Lipschitz open set with compact boundary,
then~$H$ is self-adjoint on the domain composed of
functions    
$ 
  u =
  \begin{bsmallmatrix} u_1\\ u_2 \end{bsmallmatrix}
  \in H^{1/2}(\Omega;\Com^2)
$
such that $Hu \in L^2(\Omega;\Com^2)$ 
and~\eqref{bc} holds 
in the sense of (generalised) traces~\cite{Behrndt}. 
For more regular sets~$\Omega$, 
like smooth sets or convex polygons,
the domain turns out to be composed of   
$ 
  u =
  \begin{bsmallmatrix} u_1\\ u_2 \end{bsmallmatrix}
  \in H^{1}(\Omega;\Com^2)
$
such that~\eqref{bc} holds in the sense
of (classical) traces,
see~\cite{Benguria-Fournais-Stockmeyer-Bosch_2017b}
and~\cite{LeTreust-Ourmieres-Bonafos_2018}, respectively. 

From now on, we assume that~$\Omega$ is 
a bounded Lipschitz open connected subset of~$\Real^2$.
Consequently, $H$~is an operator with compact resolvent,
so its spectrum is purely discrete.  
Zero is never an eigenvalue, for $H u=0$
has only a trivial solution.
So the eigenvalues of $H$
(counted with multiplicities) compose of the union of 
a non-increasing sequence $\{\lambda_{-j}\}_{j=1}^\infty$
of negative numbers 
and a non-decreasing sequence $\{\lambda_{j}\}_{j=1}^\infty$
of positive numbers,
which accumulate at~$-\infty$ and~$+\infty$, respectively.  
The charge conjugation symmetry 
ensures that the spectrum 
is symmetric with respect to zero, 
i.e.\ $\lambda_{-j} = \lambda_j$ for every $j \geq 1$.
Indeed,  
$ 
  u =
  \begin{bsmallmatrix} u_1\\ u_2 \end{bsmallmatrix}
$
is an eigenfunction corresponding to an eigenvalue~$\lambda$
if, and only if,  
$ 
  u =
  \begin{bsmallmatrix} \bar{u}_2\\ \bar{u}_1 \end{bsmallmatrix}
$
is an eigenfunction corresponding to an eigenvalue~$-\lambda$.
In summary,
$$
  \sigma(H) =
  \{ -\infty \leftarrow \dots \leq -\lambda_2 \leq -\lambda_1 
  < \lambda_1 \leq \lambda_2 \leq \dots \to +\infty\}
  \,,
$$
where~$\lambda_1$ is positive.

\subsection{The method}
Now, let us consider the eigenvalue equation
$
  H u = \lambda u 
$.
This is a system of two partial differential equations.
Expressing the second component of~$u$ 
from the second equation via 
(note that $\lambda = \pm m$ is never an eigenvalue)
\[
  u_2 = -\frac{i (\partial_1 + i \partial_2)u_1}{\lambda + m}
\]
and plugging this result to the first equation,
we arrive at the Helmholtz equation with Cauchy--Riemann 
oblique boundary conditions 
\begin{equation}\label{dirac_helmholtz}
\left\{
  \begin{aligned}
    &- \Delta u_1 = (\lambda^2 - m^2)u_1 
    &&\text{in} \quad \Omega \,,
    \\
    &i(\partial_1 + i\partial_2)u_1 + i(\lambda+m)(n_1+in_2)u_1 = 0 
    &&\text{on} \quad \partial\Omega
    \,.
  \end{aligned}
\right.  
\end{equation}
A similar problem can be obtained for~$u_2$.

The problem~\eqref{dirac_helmholtz} is well adapted 
to the Method of Fundamental Solutions (MFS), 
see, e.g., \cite{alves2005method} and \cite{bogolmony}. 
In the present case, 
the fundamental solution of the Helmholtz equation in \(\mathbb{R}^2\) is given by
\[
  \Phi_\lambda(x) = \frac{i}{4}H_0^{(1)} (\sqrt{\lambda}\norm*{x}).
\]
Here \(H_0^{(1)}\) is the Hankel function of the first kind and order zero 
and one has
\[
  H_0^{(1)} = J_0(x) + iY_0(x),
\]
where \(J_0\) and \(Y_0\) are the Bessel functions of first and second kind with order zero, respectively.

In order to approximate the pair of eigenvalues/eigenvectors in \eqref{dirac_helmholtz}, first let \(\hat{\Gamma}\) be a so-called \emph{admissible source set}, as defined in \cite{calves_source}, for instance the boundary of an open set \(\hat{\Omega}\) such that \(\Omega \subset \hat{\Omega}\) and \(\hat{\Gamma}\) surrounds \(\partial\Omega\). Then, we consider an expansion \(u_N\) of the form
\[
  u_N(x) = \sum_{j=1}^{N}\alpha_j \Phi_\lambda(x - y_j),
\]
where \(y_j \in \hat{\Gamma},\ j=1,\dots, N\) are known as \emph{source points}. We place them using the standard technique presented in \cite{alves2005method} based on the outward unit normal to \(\partial\Omega\) with displacement \(\eta\). 
That is, each source point \(y_j\) is defined by \(y_j = x_i +\eta n_{x_i}\), 
where \(x_i\in\partial\Omega\) and \(n_{x_i}\) is the outward unit normal to the boundary at the point \(x_i\).

Let \(\mathcal{B}\) be a linear boundary operator such that
\[
  \mathcal{B}u = 0 \text{ on } \partial\Omega,
\]
and let \(x_1,\dots,x_M\) be a discretisation of the boundary \(\partial\Omega\). Then, by linearity,
\begin{equation*}
  \mathcal{B}u(x_i) = \sum_{j=1}^{N}\alpha_j \mathcal{B}\Phi_\lambda(x_i-y_j) = 0, \quad i=1,\dots,M,
\end{equation*}
from which we can extract the coefficients \(\alpha_j\) either by solving an interpolation problem (a linear system if \(N=M\)) or by least-squares (if \(M > N\)) given by
\begin{equation}\label{MFS_m_system}
  {\underbrace{\begin{bmatrix}
      \mathcal{B}\Phi_\lambda(x_1-y_1) & \cdots & \mathcal{B}\Phi_\lambda(x_1-y_N) \\
      \vdots & \ddots & \vdots\\
      \mathcal{B}\Phi_\lambda(x_M-y_1) & \cdots & \mathcal{B}\Phi_\lambda(x_M-y_N)
  \end{bmatrix}}_{A_1(\lambda)}}
  {\underbrace{\begin{bmatrix}
      \alpha_1\\
      \vdots\\
      \alpha_N
  \end{bmatrix}}_\alpha}
  =
  \begin{bmatrix}
      0\\
      \vdots\\
      0
  \end{bmatrix}.
\end{equation}

For example, as proved in \cite{calves_source}, 
if \(\mathcal{B} = \Id\), i.e., 
the classical Dirichlet problem
\begin{equation*}
\left\{
  \begin{aligned}
    - \Delta u &= \lambda u &&\text{in} \quad \Omega,\\
    u &= 0 &&\text{on} \quad \Omega,
  \end{aligned}
\right.  
\end{equation*}
is considered,
then we have the density result
\[
  H^s(\partial\Omega) = \overline{\Span\left\{\Phi_\lambda(x - y_j)_{|x \in \partial\Omega}: y \in \hat{\Gamma}\right\}}
\]
for every \(s \leq \frac{1}{2}\). An adaptation of this density result for the Cauchy--Riemann oblique boundary conditions can be deduced using layer potential's jump results in conjunction with the Hahn--Banach theorem.

The eigenvalues~$\lambda$ 
can be calculated by the Subspace Angle Technique (SAT) introduced in \cite{mps}. Let \(\mathcal{I}\) be a set of \(L\) interior points to \(\Omega\), randomly sampled using Halton sequences as in \cite{halton1964radical}. 
Then we compute the matrix 
\begin{equation*}
  A_2(\lambda) = \begin{bmatrix}
      \Phi_\lambda(z_1-y_1) & \cdots & \Phi_\lambda(z_1-y_N) \\
      \vdots & \ddots & \vdots\\
      \Phi_\lambda(z_L-y_1) & \cdots & \Phi_\lambda(z_L-y_N)
  \end{bmatrix}
\end{equation*}
and define the block matrix 
\[
  \mathbf{A}(\lambda) = \begin{bmatrix}
    A_1(\lambda)\\
    A_2(\lambda)
  \end{bmatrix}
\]
with dimensions \((M+L) \times N\). Finally, the QR factorisation of \(\mathbf{A}\) is computed, obtaining the matrix \(\mathbf{Q}(\lambda)\). To approximate the eigenvalues of \(H\) in \eqref{dirac_helmholtz}, we study the smallest singular value \(s(\lambda)\) of \(\mathbf{Q}\). In particular, we look for the local minimisers of \(s(\lambda)\) (which should be approximately zero if the eigenfunctions are smooth), that are the desired eigenvalues \(\lambda\) of \(H\), ordered by magnitude. To bracket each local minima, a variation of the Golden Ratio Search described in \cite{alves2005method} is implemented.

\subsection{The disk}
To finish this section, we validate the numerical method in the unit disk \(\mathbb{D}\) for \(m=0\).
In this case, the spectral problem can be solved explicitly 
by separation of variables 
in polar coordinates~\cite{Lotoreichik-Ourmieres_2019}. 
It turns out that 
the first eigenvalue \(\lambda_1(\mathbb{D})\) 
is the smallest positive solution of
\[
  J_0(\lambda_1(\mathbb{D})) = J_1(\lambda_1(\mathbb{D})),
\] 
which is approximately \(\lambda_1(\mathbb{D}) \approx 1.434695650819\). 

\begin{table}[h]
  \centering
  \begin{tabular}{cccc}
      \toprule
      \textbf{First eigenvalue} & \textbf{N=600} & \textbf{N=800} & \textbf{N=1000} \\
      \midrule
      \(1.434695650819\) & $1.434695644389$ & $1.434695652496$ & $1.434695653680$ \\
      \midrule
      \textbf{Absolute error: } \(\abs{\lambda_1(\mathbb{D}) - \tilde{\lambda}_1(\mathbb{D})}\) & $6.429\times 10^{-9}$ & $1.677\times 10^{-9}$ & $2.861\times 10^{-9}$ \\
      \bottomrule
  \end{tabular}
  \caption{The first eigenvalue of $\mathbb{D}$ for $m=0$
  and approximate values and absolute errors obtained for different values of \(N\).}
  \label{tab_eigenvalues_disk_val}
\end{table}

In Table \ref{tab_eigenvalues_disk_val}, we provide a summary of our numerical results for the approximation of the first eigenvalue \(\tilde{\lambda}_1(\mathbb{D})\) using the MFS, with varying numbers of source points \(N\). In this context, the number of collocation points is set to \(M = 2N\), the quantity of inner collocation points utilised for the SAT is \(L=228\), and the displacement parameter is \(\eta=0.05\). Although precision for the disk can be increased by adjusting \(\eta\), we selected this value as it was consistently employed in the majority of simulations, particularly in cases involving non-smooth domains or domains with cusps. The presented results remain highly consistent with this configuration of boundary, inner, and source points. 
Figure~\ref{eigen_disk_m_0_u} shows the absolute value of 
the eigenfunction components~\(u_1\) and~\(u_2\)
associated with the first eigenvalue in the unit disk with \(m=0\).

\begin{figure}[h]
\begin{center}
\begin{tabular}{cc}
    \includegraphics[width=0.48\linewidth]{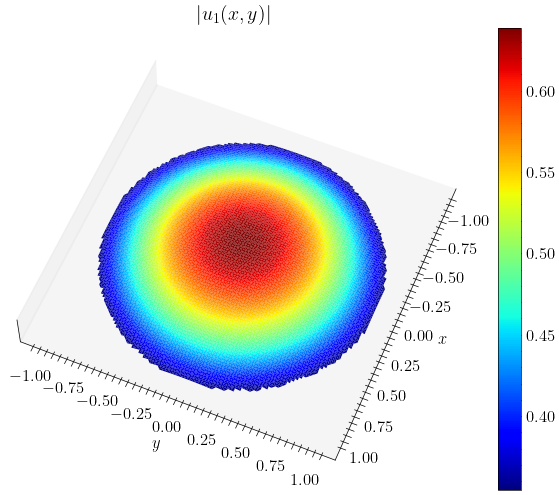}
    &
    \includegraphics[width=0.48\linewidth]{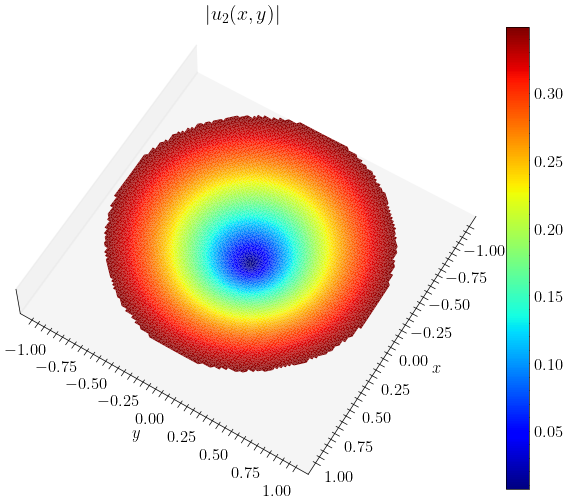}
\end{tabular}
\caption{Absolute values of the first eigenfunction components 
of $\mathbb{D}$ for $m=0$.}\label{eigen_disk_m_0_u}  
\end{center}  
\end{figure}

For future reference, 
Table~\ref{tab_eigenvalues_unit_area_disk} shows the values of 
the first three eigenvalues and its ratios for different values of \(m\), 
when considering the disk with unit area. 

\begin{table}[h]
  \centering
  \renewcommand{\arraystretch}{1.25} 
  \begin{tabular}{c|c|c|c|c|c}
      \diagbox{\textbf{Masses}}{\textbf{Eigenvalues}} & \(\lambda_1\) & \(\lambda_2\) & \(\lambda_3\) & \(\frac{\lambda_2}{\lambda_1}\) & \(\frac{\lambda_3}{\lambda_1}\) \\
      \hline
      \(m = 1\) & 3.129162417 & 5.140771507 & 5.739242446 & 1.642858638 & 1.834114591 \\
      \hline
      \(m = 5\) & 6.195931901 & 7.696248341 & 7.851334614 & 1.242145405 & 1.267175744 \\
      \bottomrule
  \end{tabular}
  \caption{Eigenvalues of $\mathbb{D}$ 
  and their ratios
  for different values of~$m$.}
  \label{tab_eigenvalues_unit_area_disk}
\end{table}
%

\section{Rectangles}\label{Sec.rect}
%
We start the presentation of our numerical results 
by considering rectangles.
Contrary to the case of the Dirichlet 
(or, more generally, Robin~\cite{Laugesen_2019}) Laplacian,
the Dirac problem on rectangles is not explicitly solvable 
by separation of variables~\cite{briet-krej}.
At the same time, no symmetrisation methods are available
in the case of the Dirac operator.
Therefore, the following conjecture due to~\cite{briet-krej}
(which is a special case of questions~\hyperref[Q2]{\textbf{(Q2)}} and~\hyperref[Q2_p]{\textbf{(Q2')}} from the introduction),
despite its illusive simplicity,
actually represents a hard open problem in spectral geometry.

\begin{conj}[\cite{briet-krej}]\label{rectangular_conjecture}
  Let \(\lambda_1 (a, b) \) denote the first eigenvalue of~$H$ 
  on a rectangle of sides \(a\) and \(b\). Then
  \begin{itemize}
    \item Area constraint (\textit{unitary area}): \(\lambda_1(a, \frac{1}{a}) \geq \lambda(1, 1), \, \forall a > 0\);
    \item Perimeter constraint (\textit{perimeter equal to 4}): \(\lambda_1(a, 2 - a) \geq \lambda(1, 1), \, \forall a \in (0, 2)\).
  \end{itemize}
\end{conj}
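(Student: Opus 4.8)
\medskip

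\noindent\emph{A strategy for Conjecture~\ref{rectangular_conjecture}.}
As the paper emphasises, this is an open problem; what follows is only a plan. The idea is to pass from the vector-valued, sign-indefinite operator~$H$ to a scalar Rayleigh quotient, and then to compare a general rectangle with the square, either by a transplantation argument (playing the role of the Steiner symmetrisation used for the Dirichlet Laplacian, which is unavailable here) or by a perturbation-plus-globalisation argument.

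\emph{Reduction.} On $R_{a,b}:=(0,a)\times(0,b)$ one has $H^2=(m^2-\Delta)\,I_2$, and since $\sigma(H)$ is symmetric about~$0$ with $\pm m\notin\sigma(H)$,
\[
  \lambda_1(a,b)^2
  =\min_{0\neq u\in\operatorname{dom}(H)}
  \frac{\displaystyle\int_{R_{a,b}}\!\big(|\nabla u_1|^2+|\nabla u_2|^2+m^2|u|^2\big)+q_\partial[u]}
       {\displaystyle\int_{R_{a,b}}|u|^2}\,,
\]
where $q_\partial[u]$ is the boundary term produced by integrating by parts in $\langle Hu,Hu\rangle$ after imposing~\eqref{bc}; on a polygon its distributed (edge) part vanishes and only vertex contributions, weighted by the exterior angles, survive. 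The preliminary steps are: to justify this identity on the natural form domain (spinors in $H^{1/2}(R_{a,b};\Com^2)$ satisfying~\eqref{bc} in the trace sense); to record $\lambda_1(a,b)=\lambda_1(b,a)$ from the isometry invariance of $\sigma(H)$; and to prove the degeneracy bound $\lambda_1(a,1/a)\to+\infty$ as $a\to0^+$, for instance by bounding below by the transverse one-dimensional infinite-mass problem, whose interval length $\min(a,1/a)$ tends to~$0$.

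\emph{Transplantation.} Let $u$ attain the minimum on $R_{a,b}$ with $ab=1$, and let $\phi(x,y):=(x/a,\,y/b)$, which maps $R_{a,b}$ onto the unit square $R_{1,1}$ with unit Jacobian. I would insert $\widetilde u:=U\,(u\circ\phi^{-1})$ into the Rayleigh quotient on $R_{1,1}$, where $U$ is a $2\times2$ unitary matrix, constant on each side of the square (the outer normal is piecewise constant there), chosen so that $\widetilde u$ obeys~\eqref{bc} edge by edge; controlling the cross terms generated by the anisotropic Jacobian of $\phi$ would then yield $\lambda_1(1,1)^2\le\lambda_1(a,b)^2$. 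The perimeter statement would be treated the same way with $\phi$ the perimeter-normalising affine map; unlike for the Dirichlet Laplacian, the area and perimeter versions are genuinely inequivalent for~$H$, since rescaling the domain rescales the mass.

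\emph{Main obstacle.} I expect the decisive difficulty to be admissibility of the competitor: the edgewise unitary $U$ matching~\eqref{bc} on the four sides is in general discontinuous at the corners, so $\widetilde u\notin H^{1/2}$ and is not an admissible trial spinor; one must either construct a single jump-free gauge reconciling all four edge conditions simultaneously or absorb the corner mismatch into a quantitatively controlled error — precisely the delicate point. Even if this is overcome, $q_\partial[\cdot]$ carries no definite sign once the Cauchy--Riemann oblique condition in~\eqref{dirac_helmholtz} is spelled out, so the comparison is not monotone in the naive way. The alternative route — showing that $a\mapsto\lambda_1(a,1/a)$ is real-analytic with a critical point at $a=1$ (forced by the $a\leftrightarrow 1/a$ symmetry), signing the second shape derivative there via a Hadamard formula adapted to the oblique problem, and excluding interior local minima — fails at exactly the last step, which for the Laplacian is dispatched by separation of variables and has no counterpart here~\cite{briet-krej}. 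In short, some genuinely new tool — a Dirac analogue of Steiner symmetrisation, or a monotonicity principle for the Cauchy--Riemann oblique Helmholtz eigenvalue under one-sided domain deformations — appears to be needed, which is why the statement is stated as a conjecture.
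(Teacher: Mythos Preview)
The paper does not prove Conjecture~\ref{rectangular_conjecture}; it is stated as open, and the paper's own contribution consists of numerical support (Figures~\ref{eigen_rectangle_area_m_1}--\ref{eigen_rectangle_perimeter_m_5}) together with the refined analytic upper bound of Theorem~\ref{Thm}. You acknowledge this, so there is no proof to compare against.

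That said, your reduction step contains a concrete error. On the rectangle the paper derives (see~\eqref{non-trivial})
\[
\|H_{a,b}u\|^2 \;=\; \|\nabla u\|^2 + m^2\|u\|^2 + m\,\|\gamma u\|^2,
\]
where $\gamma$ is the Dirichlet trace. Thus the boundary term is $q_\partial[u]=m\|\gamma u\|^2$, a \emph{non-negative integral over the edges}; it is not concentrated at the vertices, and it has a definite sign. Your claims that ``its distributed (edge) part vanishes and only vertex contributions \dots\ survive'' and that ``$q_\partial[\cdot]$ carries no definite sign'' are therefore both incorrect here. (You may be recalling the smooth-domain identity, which carries an extra curvature contribution $\tfrac12\int_{\partial\Omega}\kappa\,|u|^2$; on straight edges $\kappa=0$, and on the $H^1$ domain for convex polygons no vertex term survives.) Relatedly, for rectangles the operator domain is $H^1(\Omega_{a,b};\Com^2)$ with~\eqref{bc}, not $H^{1/2}$.

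This correction reshapes the obstacle in your transplantation scheme. With the correct $q_\partial[u]=m\|\gamma u\|^2$, the affine map $\phi:R_{a,1/a}\to R_{1,1}$ rescales the boundary measure anisotropically (horizontal edges by $1/a$, vertical edges by $a$), so the boundary contribution already fails to transform cleanly --- an obstruction you have missed, on top of the corner gauge discontinuity you correctly flag. The indefinite-sign worry, by contrast, disappears. Your alternative perturbative route and its diagnosed failure at the globalisation step is in line with what~\cite{briet-krej} already records; nothing in your outline goes beyond that, so the honest conclusion remains that the conjecture is open.
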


In \cite{briet-krej}, Conjecture \ref{rectangular_conjecture} 
was proved under additional assumptions
(namely for large eccentricity of the rectangles or large masses)
as a consequence of the lower and upper bounds given 
by the following theorem.

\begin{theorem}[\cite{briet-krej}]\label{laplacian_bound}
  For every \(m \geq 0\), one has 
  \begin{equation}\label{laplacian_bound_eqn}
    \left(\frac{\pi}{a}\right)^2 \max\left\{\frac{1}{1+(ma)^{-1}}, \frac{1}{2}\right\}^2 + \left(\frac{\pi}{b}\right)^2 \max\left\{\frac{1}{1+(mb)^{-1}}, \frac{1}{2}\right\}^2 \leq \lambda_1(a, b)^2 - m^2 \leq \left(\frac{\pi}{a}\right)^2 + \left(\frac{\pi}{b}\right)^2.
  \end{equation}
\end{theorem}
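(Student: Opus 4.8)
The plan is to exploit the equivalent formulation \eqref{dirac_helmholtz}: an eigenfunction of $H$ corresponds to a function $u_1$ satisfying the Helmholtz equation $-\Delta u_1 = (\lambda^2 - m^2) u_1$ with the Cauchy--Riemann oblique condition $i(\partial_1 + i\partial_2)u_1 + i(\lambda+m)(n_1+in_2)u_1 = 0$ on $\partial\Omega$. Setting $\mu := \lambda_1(a,b)^2 - m^2$, the quantity we must bracket is $\mu$, which should be thought of as the first eigenvalue of a Laplacian with $\lambda$-dependent (hence $\mu$-dependent) Robin-type boundary conditions on the rectangle $R = (0,a)\times(0,b)$. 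The natural tool is the variational (min--max) characterisation of $\mu$ obtained by squaring $H$: one has $H^2 = \operatorname{diag}(-\Delta + m^2, -\Delta + m^2)$ acting componentwise, and the infinite-mass condition \eqref{bc} for $H$ translates into a Robin-type condition for $H^2$ whose boundary coefficient is essentially the curvature of $\partial\Omega$ (which vanishes on the flat sides of a rectangle, with the corners contributing separately). Thus $\mu = \lambda_1(H)^2 - m^2$ admits a Rayleigh quotient of the form $\mu = \inf \big( \int_R |\nabla u_1|^2 \big) \big/ \big(\int_R |u_1|^2\big)$ over an appropriate form domain encoding the boundary conditions; the flatness of the rectangle's sides is what makes the bulk term clean and the corner terms manageable.

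For the \emph{upper bound} I would use the test-function $u_1(x,y) = \phi(x)\psi(y)$ built from the one-dimensional eigenfunctions, chosen so that the oblique boundary condition is satisfied on as much of $\partial R$ as possible; the Rayleigh quotient of such a separated trial function gives exactly $(\pi/a)^2 + (\pi/b)^2$, which is the Dirichlet value on $R$. The point is that the Dirichlet Laplacian dominates because the infinite-mass boundary form, after squaring, is sign-favourable (the relevant boundary contribution pushes $\mu$ \emph{below} the Dirichlet eigenvalue). So the upper bound amounts to: produce a good separated trial pair and check that the boundary terms coming from the oblique condition do not increase the quotient. This half should be routine once the variational form is correctly set up.

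For the \emph{lower bound}, which I expect to be the main obstacle, the idea is to reduce to a one-dimensional problem in each variable. Because the eigenfunction $u_1$ on the rectangle need not separate, one must instead estimate $\int_R |\partial_1 u_1|^2$ and $\int_R |\partial_2 u_1|^2$ separately from below, treating $u_1(\cdot, y)$ for fixed $y$ as a function on $(0,a)$ subject to the trace of the oblique condition at $x=0$ and $x=a$. The oblique condition mixes $\partial_1 u_1$ and $\partial_2 u_1$, so one obtains, after integration by parts on each horizontal segment, a one-dimensional Rayleigh quotient with a Robin parameter proportional to $\lambda + m$; its first eigenvalue is $\big( (\pi/a)\, \theta(ma) \big)^2$ where $\theta(t) = \max\{1/(1+t^{-1}), 1/2\}$ is precisely the interpolation between the Neumann-type value (factor $1/2$, giving $(\pi/(2a))^2$ ) and the Dirichlet value (factor $1$) that appears in \eqref{laplacian_bound_eqn}. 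Summing the two directional estimates yields the left-hand side. The delicate points are: (i) justifying the integration by parts and the use of the real part of the oblique condition to get a \emph{real} Robin problem with the stated monotone coefficient; (ii) handling the four corners of the rectangle, where the boundary condition is not classically defined, by working in $H^1(R)$ (licit here since convex polygons give $H^1$ regularity, as noted in the preliminaries) and showing the corner contributions have a favourable sign or are negligible; and (iii) verifying that the one-dimensional minimum really is the claimed $\theta$-interpolation, i.e.\ solving $-\phi'' = \nu\phi$ on $(0,a)$ with the Robin data and checking that the lowest $\nu$ has the closed form above — this is an elementary but slightly fiddly transcendental-equation analysis, and the $\max$ with $1/2$ reflects the threshold where the optimal $\phi$ switches from a genuinely Robin profile to being effectively Neumann.
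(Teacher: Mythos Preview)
Note first that the present paper does not itself prove Theorem~\ref{laplacian_bound}; it is quoted from~\cite{briet-krej}. However, within the proof of Theorem~\ref{Thm} the paper records the key variational identity~\eqref{non-trivial},
\[
\|H_{a,b}u\|^2 \;=\; \|\nabla u\|^2 + m^2\,\|u\|^2 + m\,\|\gamma u\|^2,
\]
valid for spinors $u$ in the operator domain, and states that the upper bound of~\cite{briet-krej} is obtained by inserting the Dirichlet trial spinor $u(x_1,x_2)=\cos(\tfrac{\pi}{a} x_1)\cos(\tfrac{\pi}{b} x_2)\begin{psmallmatrix}1\\1\end{psmallmatrix}$ into~\eqref{variational}. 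Your proposal deviates from this in a way that is a genuine error, not an alternative route.

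Your quadratic form is misidentified. You assert that squaring $H$ yields a Robin condition whose boundary coefficient is ``essentially the curvature'', hence zero on flat sides, and conclude that $\mu=\inf\int_R|\nabla u_1|^2\big/\int_R|u_1|^2$ with no boundary term. But~\eqref{non-trivial} shows that on the rectangle the boundary contribution is $m\,\|\gamma u\|^2$, a \emph{mass} term that does not vanish; for smooth domains there is an additional curvature piece, but the $m$ term is always present. This missing term is precisely what generates the mass-dependent factors on the left of~\eqref{laplacian_bound_eqn}.

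The same error inverts your logic for the upper bound: since $m\,\|\gamma u\|^2\ge 0$, the boundary form pushes $\lambda_1^2-m^2$ \emph{above} the bare Dirichlet quotient, not below. The Dirichlet value is nonetheless an upper bound because $H^1_0$ spinors trivially satisfy~\eqref{system} and annihilate the boundary term --- no ``sign-favourability'' is invoked. For the lower bound, your scheme via the oblique condition on the scalar $u_1$ alone produces a Robin parameter proportional to $\lambda+m$, i.e.\ depending on the unknown eigenvalue, an implicit problem you do not close. Working instead with the full spinor and~\eqref{non-trivial}, the Fubini decomposition displayed in the proof of Theorem~\ref{Thm} splits the form into two one-dimensional Robin problems with the \emph{fixed} parameter~$m$, from which the left-hand side of~\eqref{laplacian_bound_eqn} follows directly.
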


In this paper, we present a strong numerical evidence 
that no additional hypotheses are needed for the validity of 
Conjecture~\ref{rectangular_conjecture}.
Moreover, higher eigenvalues are equally investigated.
In particular,
Figures~\ref{eigen_rectangle_area_m_1}--\ref{eigen_rectangle_area_m_5}
and 
\ref{eigen_rectangle_perimeter_m_1}--\ref{eigen_rectangle_perimeter_m_5} 
represent the dependence of the first five eigenvalues on~$a$,
for various values of the mass (namely \(m = 1\) and \(m = 5\)),
under the area or perimeter constraint, respectively. 
Several interesting observations can be made.

\begin{figure}[h]
  \centering
  \begin{minipage}{.5\textwidth}
    \centering
    \includegraphics[width=0.9\linewidth]{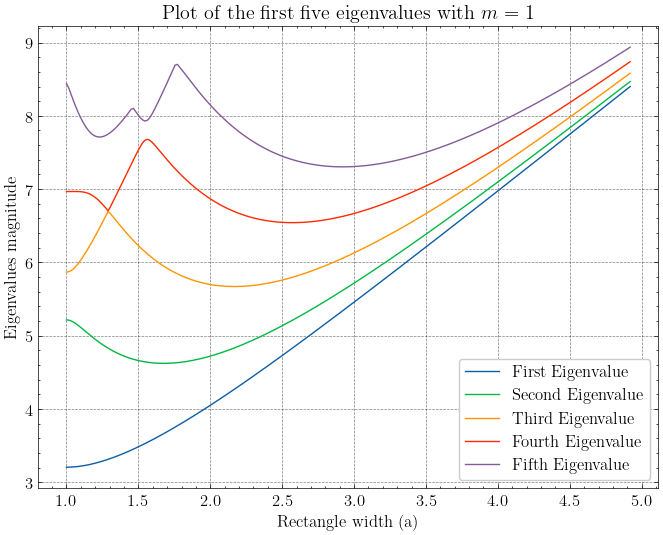}
    \captionsetup{width=0.9\linewidth}
    \captionof{figure}{Behaviour of the first five eigenvalues for
    rectangles with unit area, width \(a\) and \(m=1\).}
    \label{eigen_rectangle_area_m_1}
  \end{minipage}%
  \begin{minipage}{.5\textwidth}
    \centering
    \includegraphics[width=0.9\linewidth]{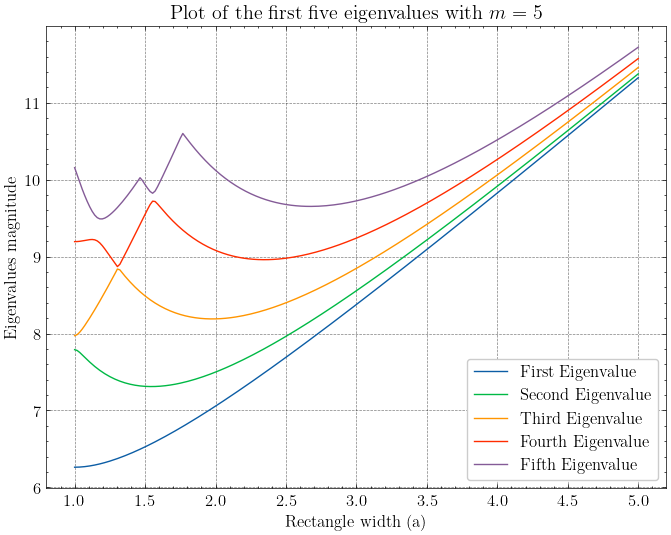}
    \captionsetup{width=0.9\linewidth}
    \captionof{figure}{Behaviour of the first five eigenvalues for
    rectangles with unit area, width \(a\) and \(m=5\).}
    \label{eigen_rectangle_area_m_5}
  \end{minipage}
\end{figure}

\begin{figure}[h]
  \centering
  \begin{minipage}{.5\textwidth}
    \centering
    \includegraphics[width=0.9\linewidth]{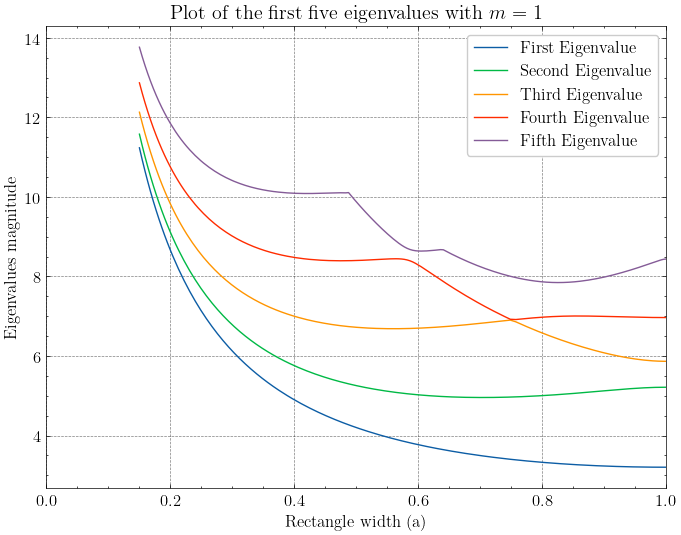}
    \captionsetup{width=0.9\linewidth}
    \captionof{figure}{Behaviour of the first five eigenvalues for
    rectangles with perimeter equal to 4, width \(a\) and \(m=1\).}
    \label{eigen_rectangle_perimeter_m_1}
  \end{minipage}%
  \begin{minipage}{.5\textwidth}
    \centering
    \includegraphics[width=0.9\linewidth]{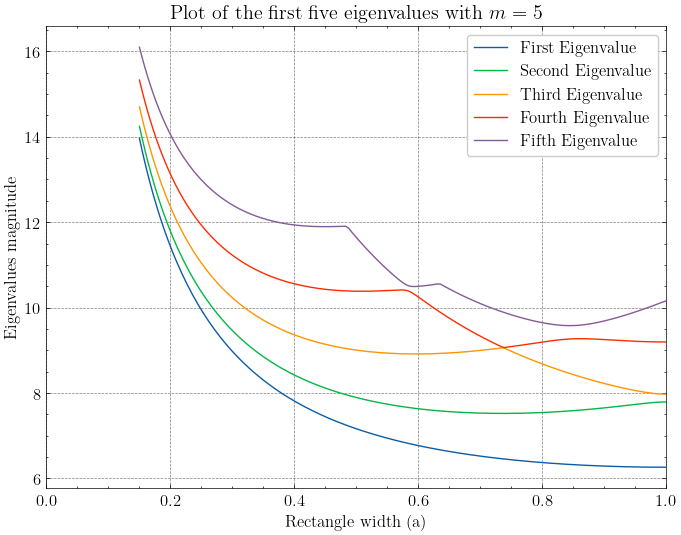}
    \captionsetup{width=0.9\linewidth}
    \captionof{figure}{Behaviour of the first five eigenvalues for
    rectangles with perimeter equal to 4, width \(a\) and \(m=5\).}
    \label{eigen_rectangle_perimeter_m_5}
  \end{minipage}
\end{figure}

\subsection{The area constraint}
First, let us consider the area constraint 
($b=\frac{1}{a}$ and $a>1$ without loss of generality). 
The square (corresponding to $a=1$) is clearly the global minimum 
for the first eigenvalue in 
Figures~\ref{eigen_rectangle_area_m_1} and~\ref{eigen_rectangle_area_m_5}.

While the behaviour of the spectrum remains qualitatively consistent for different values of \(m\), changes in the eigenvalues' magnitude 
are evident. Significantly, as \(m\) increases, the eigenvalues approach \(m\) with a diminishing gap. While this trend is already discernible 
in Figures~\ref{eigen_rectangle_area_m_1} and~\ref{eigen_rectangle_area_m_5}, it becomes more apparent as \(m\) takes on even larger values. 
This behaviour is consistent with the asymptotics 
\begin{equation}\label{limit}
  \lambda_1(a, b)^2 - m^2 \xrightarrow[m \to \infty]{} 
  \left(\frac{\pi}{a}\right)^2 + \left(\frac{\pi}{b}\right)^2
  \,,
\end{equation}
which follows from Theorem~\ref{laplacian_bound}.
Note that the right-hand side is the first eigenvalue
of the Dirichlet Laplacian in the rectangle of sides~$a$ and~$b$.
This ``non-relativistic limit'' is well known to hold for smooth domains
\cite{Arrizibalaga-LeTreust-Raymond_2017,
Behrndt-Frymark-Holzmann-Stelzer-Landauer}.

In agreement with the Dirichlet Laplacian, 
the second eigenvalue is not minimised by the square.
On the other hand, as a new result, 
different from what happens for the Dirichlet Laplacian,
we observe that
the third eigenvalue is not globally minimised by the square;
if \(m = 1\), the minimiser is the rectangle of width \(a \approx 2.2\). 

Spikes are evident in the plots. 
For instance, the third and fourth eigenvalues appear to coincide for \(a\) between the values 1 and 1.5. 
These spikes correspond to domains where the eigenvalue has multiplicity 2. This behaviour becomes more frequent as the order of the eigenvalues increases. Finally, by increasing the value of the parameter \(a\), an approximate linear growth in the magnitude of the eigenvalues is observed, with no change in their multiplicity.
 
\subsection{The perimeter constraint}
In Figures \ref{eigen_rectangle_perimeter_m_1} and \ref{eigen_rectangle_perimeter_m_5} the results with fixed perimeter 
are shown ($b=2-a$ and $a \in (0,1)$ without loss of generality). 
Again, the square (corresponding to $a=1$) is clearly 
the global minimum for the first eigenvalue.

Contrary to the area constraint, 
there is no particularity in the behaviour of the third eigenvalue. 
In particular,  \(\lambda_3(a, b)\) is always minimised 
by the square among all rectangles with a fixed perimeter.

\subsection{Eigenfunctions}
The present numerical method enables one to compute 
the eigenfunctions of the Dirac operator, too.
As an illustration, 
Figure~\ref{eigenfunction_abs_u_1_square} shows
the absolute value of the eigenfunction components~$u_1$ and~$u_2$
associated with the first eigenvalue in the unit square 
with $m=1$. 

\begin{figure}[h]
\begin{center}
\begin{tabular}{cc}
    \includegraphics[width=0.48\linewidth]{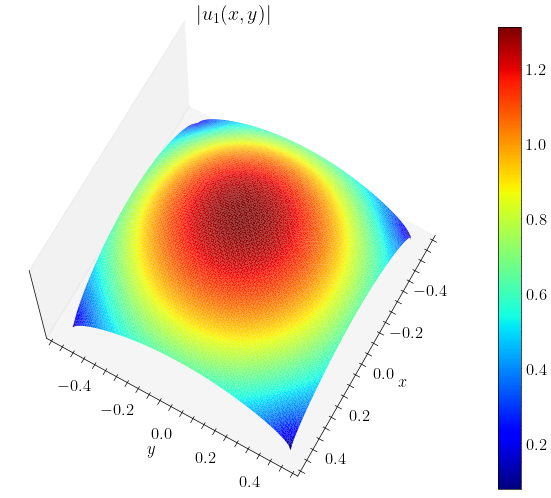}
    &
    \includegraphics[width=0.48\linewidth]{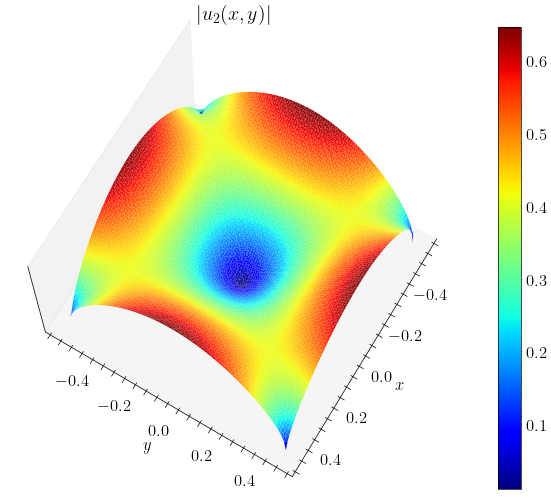}
\end{tabular}
\caption{Absolute values of the first eigenfunction components 
of the square for $m=1$.}\label{eigenfunction_abs_u_1_square}  
\end{center} 
\end{figure}

\subsection{A refined upper bound}
We complement the numerical results for rectangles
by a new analytic upper bound.
\begin{theorem}\label{Thm}
    For every $m \geq 0$, one has
    \begin{equation}\label{upper_bound}
        \lambda_1(a, b)^2 - m^2 \leq \min \left\{\left(\frac{\nu_1(m a)}{a}\right)^2 + \left(\frac{\pi}{b}\right)^2, \left(\frac{\pi}{a}\right)^2 + \left(\frac{\nu_1(m b)}{b}\right)^2\right\},
    \end{equation}
    where \(\nu_1(m)\) is the unique root of the equation 
\begin{equation}\label{implicit}
        \frac{\tan(\nu)}{\nu} = -\frac{1}{m}
\end{equation}
lying in the interval $[\frac{\pi}{2},\pi)$
(for $m=0$ we set $\nu_1(0) = \frac{\pi}{2}$ by continuity).    
\end{theorem}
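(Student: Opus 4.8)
The plan is to use the variational characterisation of the first eigenvalue $\lambda_1(a,b)$ and produce an explicit test spinor adapted to one of the two sides. Recall that $\lambda_1$ is the smallest positive number in the symmetric spectrum, and since the spectrum is symmetric about zero, the square $\lambda_1^2$ is the bottom of the spectrum of $H^2$. On the operator domain, $H^2$ acts diagonally as $-\Delta + m^2$ on each component, so $\lambda_1(a,b)^2 - m^2$ equals the minimum of the Rayleigh quotient $\int_\Omega |\nabla u_1|^2 / \int_\Omega |u_1|^2$ taken over $u_1$ arising as the first component of an admissible spinor — that is, over $u_1 \in H^1$ for which there exists $u_2$ with $(u_1,u_2)$ in the domain of $H$, equivalently satisfying the Cauchy--Riemann oblique boundary condition in \eqref{dirac_helmholtz}. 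So the whole game is to exhibit a good trial function $u_1$ on the rectangle $\Omega = (0,a)\times(0,b)$ that is compatible with that boundary condition and whose Rayleigh quotient equals the claimed bound.

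The key construction is a separated trial function of the form $u_1(x_1,x_2) = f(x_1)\,g(x_2)$, where I take $g(x_2) = \sin(\pi x_2 / b)$ (the Dirichlet mode in the $x_2$-direction, which is what makes the $(\pi/b)^2$ term appear) and $f(x_1)$ a one-dimensional profile on $(0,a)$ chosen to carry the boundary condition on the two vertical sides $x_1 = 0$ and $x_1 = a$. The natural candidate is $f(x_1) = \cos\!\big(\nu_1(ma)\,(x_1/a - 1/2)\big)$ or a suitable cosine/sine combination symmetric about $x_1 = a/2$. One then checks directly that the oblique boundary condition from \eqref{dirac_helmholtz} — namely $i(\partial_1 + i\partial_2)u_1 + i(\lambda+m)(n_1+in_2)u_1 = 0$ — restricted to the vertical sides (where $n = (\mp 1,0)$) and exploiting that $g$ vanishes on the horizontal sides, reduces to exactly the transcendental condition $\tan(\nu)/\nu = -1/(ma)$ defining $\nu_1(ma)$; this is the computation that pins down the constant in \eqref{implicit}. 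Since $g$ vanishes on the top and bottom, the boundary condition there is automatically satisfied regardless of $f$, so only the two vertical sides impose a constraint, and that constraint is met by the choice of $\nu_1(ma)$.

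Having produced an admissible $u_1$, the Rayleigh quotient separates: $\int |\nabla u_1|^2 / \int |u_1|^2 = \big(\int f'^2/\int f^2\big) + \big(\int g'^2/\int g^2\big)$, and the second summand is exactly $(\pi/b)^2$ while the first is $(\nu_1(ma)/a)^2$ by the choice of $f$. This gives $\lambda_1(a,b)^2 - m^2 \le (\nu_1(ma)/a)^2 + (\pi/b)^2$. By symmetry of the roles of the two axes — swap $a \leftrightarrow b$ and use $\sin(\pi x_1/a)$ in the first variable with the $\nu_1(mb)$-cosine in the second — one also gets $\lambda_1(a,b)^2 - m^2 \le (\pi/a)^2 + (\nu_1(mb)/b)^2$, and taking the minimum yields \eqref{upper_bound}. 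It remains to note $\nu_1(0) = \pi/2$ (consistent with the continuity convention) and that $\nu_1(ma) < \pi$ always, so the new bound genuinely improves the mass-independent $(\pi/a)^2 + (\pi/b)^2$ of Theorem~\ref{laplacian_bound}.

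The main obstacle is the boundary-condition bookkeeping: one must be careful that the trial function $u_1$ is not merely in $H^1$ but is the first component of a spinor genuinely in the self-adjointness domain of $H$, i.e.\ that the companion $u_2 = -i(\partial_1 + i\partial_2)u_1/(\lambda+m)$ (with $\lambda$ here a free parameter to be optimised, not yet an eigenvalue) together with $u_1$ satisfies \eqref{bc} in the trace sense on \emph{all four} sides, including the corners of the rectangle where regularity is delicate. Concretely one should verify that the Cauchy--Riemann oblique condition — which mixes $\partial_1 u_1$ and $\partial_2 u_1$ — is compatible on each edge with the separated form; the vanishing of $g = \sin(\pi x_2/b)$ at $x_2 = 0,b$ handles the horizontal edges cleanly, but I would double-check that no spurious corner contributions appear and that the relevant Rayleigh-quotient inequality (the min--max principle for $H^2$ over this admissible subspace) is correctly applied. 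This is routine but is the step where an error would most plausibly creep in.
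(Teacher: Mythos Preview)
Your overall strategy --- a separated trial function that is Dirichlet in one direction and carries a one-dimensional Dirac mode in the other --- is exactly the idea behind the paper's proof. However, the variational set-up you use is not correct, and this is a genuine gap rather than the ``routine bookkeeping'' you flag at the end.

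First, the claimed identity $\lambda_1(a,b)^2 - m^2 = \inf \int|\nabla u_1|^2\big/\int |u_1|^2$ over first components of admissible spinors is false. For $u$ in the domain of $H$ one has
\[
\|Hu\|^2 \;=\; \|\nabla u\|^2 + m^2\|u\|^2 + m\,\|\gamma u\|^2_{L^2(\partial\Omega)},
\]
where $\gamma$ is the boundary trace; the Rayleigh quotient for $H^2$ therefore carries a boundary term and involves the \emph{full spinor}, not just $u_1$. As a consequence, your separated computation $\int f'^2/\int f^2 = (\nu_1(ma)/a)^2$ is also off: for a scalar $f(x)=\cos(kx)$ on $(-a/2,a/2)$ the ratio $\int f'^2/\int f^2$ equals $k^2$ only when $\sin(ka)=0$, which fails for $ka=\nu_1(ma)\in(\pi/2,\pi)$. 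The identification $(\nu_1(ma)/a)^2 = \lambda_1(a)^2 - m^2$ belongs to the one-dimensional \emph{Dirac} problem with its own boundary contribution, not to a scalar Laplacian quotient.

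Second, you test admissibility via the oblique condition in \eqref{dirac_helmholtz}, but that condition contains the unknown $\lambda$: it is the reformulation of the \emph{eigenvalue equation}, not of the operator domain. A trial function for the min--max must satisfy the $\lambda$-free spinorial constraint~\eqref{bc}, which requires specifying both components.

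The paper's fix is very close to what you intend: take the spinor
\[
u(x_1,x_2)=\varphi(x_1)\,\cos\!\big(\tfrac{\pi}{b}\,x_2\big),
\qquad \varphi\in\Com^2,
\]
where $\varphi$ is the first eigenfunction of the one-dimensional Dirac operator $H_a$ on $(-\tfrac{a}{2},\tfrac{a}{2})$ with boundary conditions $\varphi_2(\pm\tfrac{a}{2})=\pm i\,\varphi_1(\pm\tfrac{a}{2})$. These are precisely the conditions~\eqref{bc} on the vertical sides, and the cosine kills the horizontal ones. Plugging into the quadratic-form identity above and its one-dimensional analogue (both with the boundary term), Fubini gives $\|Hu\|^2/\|u\|^2 = \lambda_1(a)^2 + (\pi/b)^2$, and since $\lambda_1(a)^2 = m^2 + (\nu_1(ma)/a)^2$ the first branch of \eqref{upper_bound} follows; the second is obtained by swapping the roles of $a$ and $b$.
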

\begin{figure}[h]
\begin{center}
\includegraphics[width=0.4\linewidth]{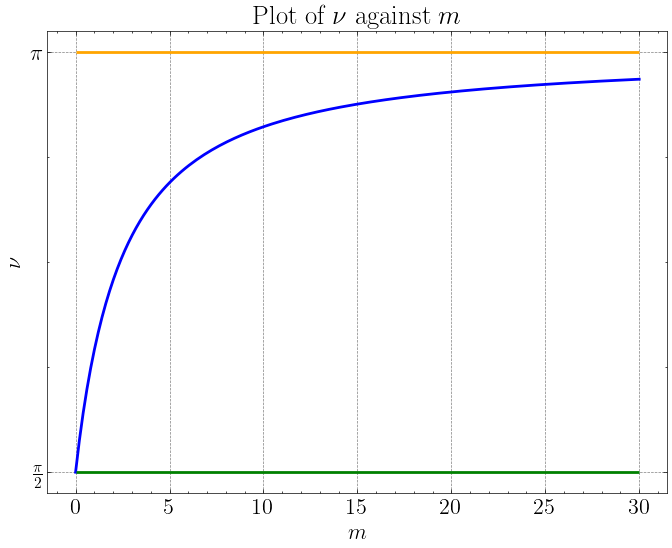}
\caption{The dependence of the solution 
of~\eqref{implicit} on~$m$.}\label{Fig.implicit}  
\end{center} 
\end{figure}

This result is a mass-dependent 
improvement upon Theorem~\ref{laplacian_bound}.
Indeed, $\nu_1(m) < \pi$ for every $m \geq 0$.
Note that $m \mapsto \nu_1(m)$ is increasing
and $\nu_1(m) \to \pi$ as as $m \to \infty$.
The dependence of $\nu_1(m)$ on~$m$ is visualised 
in Figure~\ref{Fig.implicit}.  
Figures~\ref{mass_3d}.(a), (b) and (c) compare the upper bounds of
Theorem~\ref{laplacian_bound} and~\ref{Thm}
for different rectangles with unit area. 
Both bounds become sharp in the limit $m \to \infty$, 
when $\lambda_1(a, b)^2 - m^2$ approaches the Dirichlet eigenvalue,
see~\eqref{limit}.
The advantage of~\eqref{upper_bound} 
is more evident for rectangles with high eccentricity.


%
\begin{figure}[ht]
  \centering
  \begin{tabular}{ccc}
   \includegraphics[width=0.32\textwidth]{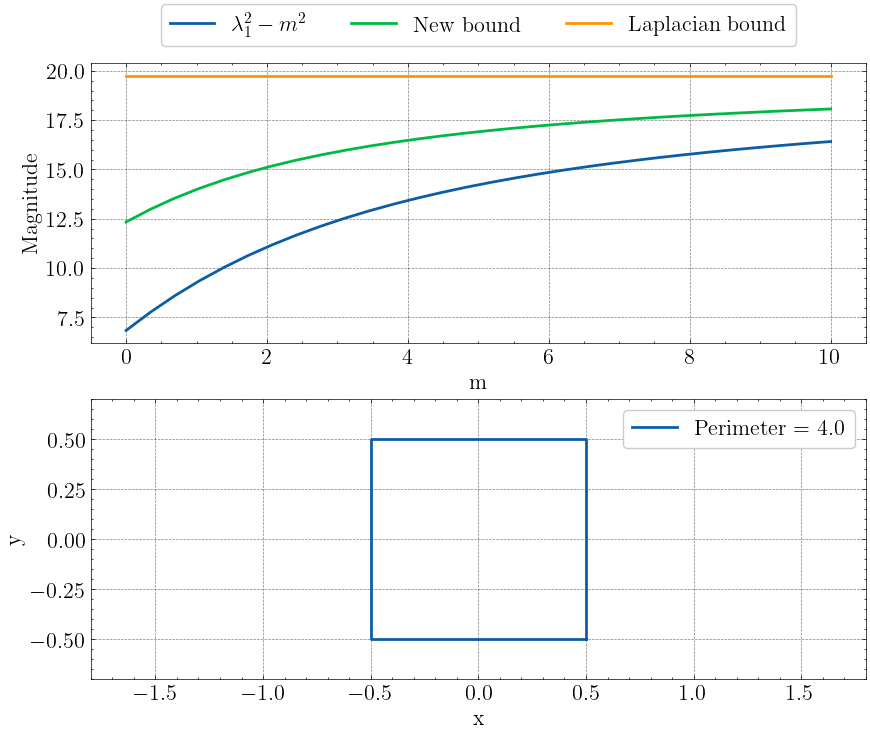}
   &\includegraphics[width=0.32\textwidth]{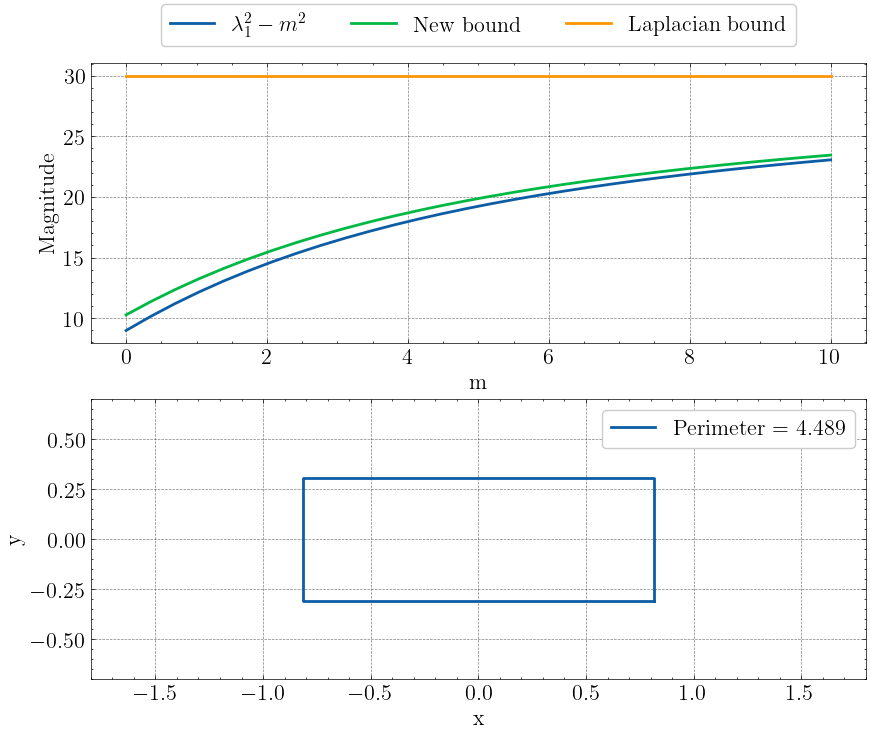}
   &\includegraphics[width=0.32\textwidth]{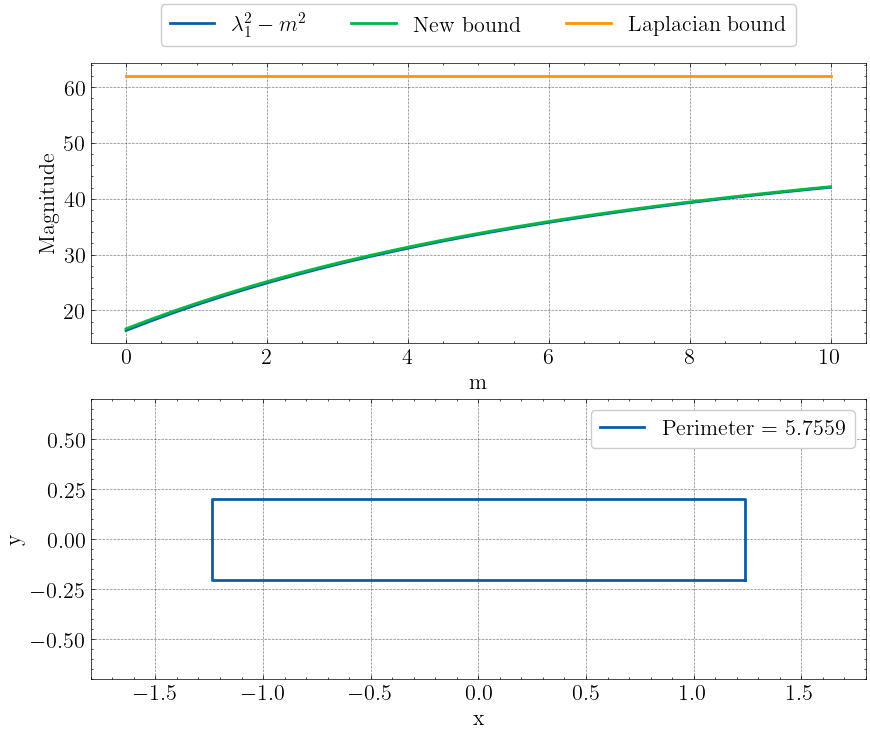}
   \\
   (a) & (b) & (c)
   \end{tabular}
\caption{Dependence of \(\lambda_1^2 - m^2\) on~$m$ and the upper bounds \eqref{laplacian_bound_eqn} and \eqref{upper_bound} for three different rectangles with varying width.}\label{mass_3d} 
\end{figure}
\begin{proof}[Proof of Theorem~\ref{Thm}]
Let $\Omega_{a,b} = (-\ademi,\ademi) \times (-\bdemi,\bdemi)$, 
a rectangle of sides~$a$ and~$b$. 
Let $H_{a,b}$ be the Dirac operator~$H$ in $L^2(\Omega_{a,b};\Com^2)$.
Its lowest positive eigenvalue~$\lambda_1(a,b)$ 
is determined by the variational formula
\begin{equation}\label{variational}  
  \lambda_1(a,b)^2 
  = \inf_u
  \frac{\|H_{a,b} u\|^2}{\|u\|^2}  
  \,,
\end{equation}
where the infimum is taken over all non-zero functions~$u$
in the domain of~$H_{a,b}$. 
More specifically, $u \in H^1(\Omega_{a,b};\Com^2)$
and the boundary conditions
\begin{equation}\label{system} 
\begin{aligned}
  u_2 &= -u_1 
  && \mbox{on} \quad \left(-\ademi,\ademi\right) \times \left\{\bdemi\right\} \,,
  \\
  u_2 &= u_1 
  && \mbox{on} \quad \left(-\ademi,\ademi\right) \times \left\{-\bdemi\right\} \,,
  \\
  u_2 &= iu_1 
  && \mbox{on} \quad \left\{\ademi\right\} \times \left(-\bdemi,\bdemi\right) \,,
  \\
  u_2 &= -iu_1 
  && \mbox{on} \quad \left\{-\ademi\right\} \times \left(-\bdemi,\bdemi\right) 
  \,.
\end{aligned}  
\end{equation}
hold in the sense of traces.
It is non-trivial but straightforward to derive the formula
\begin{equation}\label{non-trivial} 
  \|H_{a,b} u\|^2 = \|\nabla u\|^2 + m^2 \|u\|^2 + m \, \|\gamma u\|^2  
\end{equation}
valid for every $u$ in the domain of $H_{a,b}$,
where $\gamma: H^{1}(\Omega_{a,b};\Com^2) 
\to L^2(\partial\Omega_{a,b};\Com^2)$ 
denotes the Dirichlet trace. 

Simultaneously, consider the one-dimensional operator
\begin{equation}\label{operator.1D}
\begin{aligned}
  H_a	 &:= 
  \begin{bmatrix}
    m & -i \partial \\
    -i \partial & -m
  \end{bmatrix}
  \qquad \mbox{in} \qquad
  L^2\left(\left(-\ademi,\ademi\right);\Com^2\right)
  \,,
\end{aligned}  
\end{equation}
whose domain consists of functions 
$
\varphi =
  \begin{bsmallmatrix}
    \varphi_1 \\ \varphi_2 
  \end{bsmallmatrix}
  \in H^1\left(\left(-\ademi,\ademi\right);\Com^2\right)
$
satisfying the boundary conditions  
$$
  \varphi_2(\pm a) = \pm i \varphi_1(\pm a)
  \,.
$$  
Note that~$H_a$ corresponds to 
the ``longitudinal'' part of~\eqref{operator}.
At the same time, consider
the unitarily equivalent variant $\tilde{H}_b := r^*H_br$ with 
$
  r :=
  \begin{psmallmatrix}
    i & 0 \\
    0 & 1
  \end{psmallmatrix}
$,
which corresponds to the ``transversal'' part~\eqref{operator}.
The spectral problem for~$H_a$ (and~$\tilde{H}_b$)
can be solved explicitly. 
In particular, the lowest positive eigenvalue~$\lambda_1(a)$ of~$H_a$
equals 
\begin{equation}
  \lambda_1(a) = \sqrt{m^2 + \left(\frac{\nu_1(ma)}{a}\right)^2}
  \,,
\end{equation}
where $\nu_1(m)$ is the unique root of the equation~\eqref{implicit}
lying in the interval $\left[\frac{\pi}{2},\pi\right)$.
One has the variational formulation
\begin{equation}  
  \lambda_1(a)^2 
  = \inf_{\varphi} 
  \frac{\|H_{a} \varphi\|^2}{\|\varphi\|^2}  
  \,,
\end{equation}
where the infimum is taken over all non-zero functions~$\varphi$
in the domain of~$H_{a}$ and
\begin{equation}\label{non-trivial.1D} 
  \|H_{a} \varphi\|^2 
  =  \int_{-\ademi}^{\ademi} |\varphi'(x)|^2 \, \der x
  + m^2 \int_{-\ademi}^{\ademi} |\varphi(x)|^2 \, \der x
  + m \, \left(|\varphi(-\ademi)|^2 + |\varphi(\ademi)|^2\right)
  \,.
\end{equation}
The eigenfunctions of~$H_a$ can be expressed in terms
of sines and cosines~\cite{BBKO},
but we do not need the explicit formulae.

The upper bound of Theorem~\ref{laplacian_bound} 
was obtained in~\cite{briet-krej} by considering 
the trial function (associated with the first eigenfunction 
of the Dirichlet Laplacian in the rectangle)
\begin{equation*}
  u(x_1,x_2) = \cos(\mbox{$\frac{\pi}{a}$} x_1) 
  \cos(\mbox{$\frac{\pi}{a}$} x_2) 
  \begin{pmatrix}
    1 \\ 1
  \end{pmatrix}
\end{equation*}
in~\eqref{variational}.  
Note that~$u$ satisfies the Dirichlet boundary conditions
on~$\partial\Omega_{a,b}$, so~\eqref{system} automatically holds.
The improvement of Theorem~\ref{Thm} is based on the idea
that it is enough to satisfy the Dirichlet boundary conditions
on two parallel boundaries only and to use the first eigenfunction
of~$H_a$ or~$\tilde{H}_b$ in the other direction;
we are grateful to Lo\"ic Le Treust for this observation~\cite{private}.
More specifically, using the Fubini theorem,
let us re-write~\eqref{non-trivial} as follows:
\begin{equation*}
\begin{aligned}
  \lefteqn{\|H_{a,b} u\|^2 + m^2 \, \|u\|^2}
  \\
  &= \int_{-\bdemi}^{\bdemi} \left(
  \int_{-\ademi}^{\ademi} |\partial_1 u(x_1,x_2)|^2 \,  \der x_1
  + m^2 \int_{-\ademi}^{\ademi} |u(x_1,x_2)|^2 \,  \der x_1
  + m \, |u(-\ademi,x_2)|^2 + m \, |u(\ademi,x_2)|^2
  \right) \der x_2
  \\
  & \quad + \int_{-\ademi}^{\ademi} \left(
  \int_{-\bdemi}^{\bdemi} |\partial_2 u(x_1,x_2)|^2 \,  \der x_2
  + m^2 \int_{-\bdemi}^{\bdemi} |u(x_1,x_2)|^2 \,  \der x_2
  + m \, |u(x_1,-\bdemi)|^2 + m \, |u(x_1,\bdemi)|^2
  \right) \der x_1
  \,.
\end{aligned}  
\end{equation*}
Then, recalling~\eqref{non-trivial.1D},
it is clear that 
the first bound on the right-hand side of~\eqref{upper_bound}  
is obtained by using the trial function
\begin{equation} 
  u(x_1,x_2) = \varphi_1(x_1) \, \cos(\mbox{$\frac{\pi}{b}$} x_2)  
\end{equation}
in~\eqref{variational},
where~$\varphi_1$ is the eigenfunction of~$H_a$
corresponding to~$\lambda_1(a)$  
and the cosine is the Dirichlet Laplacian eigenfunction 
of the transverse interval $(-\bdemi,\bdemi)$.
The second bound on the right-hand side of~\eqref{upper_bound}  
is obtained by using the trial function
\begin{equation} 
  u(x_1,x_2) = \cos(\mbox{$\frac{\pi}{a}$} x_1) \, \tilde\varphi_1(x_2) 
\end{equation}
in~\eqref{variational},
where~$\tilde\varphi_1$ is the eigenfunction of~$\tilde{H}_b$
corresponding to~$\lambda_1(b)$.
\end{proof}

\subsection{From rectangles to quadrilaterals}
Finally, we provide a numerical evidence 
that the upper bound~\eqref{upper_bound} 
may be extended for any quadrilateral. 
We rewrite the upper bound \eqref{upper_bound} 
using the formulae for the area and perimeter of the rectangle, \(A = ab\) and \(P = 2(a+b)\), respectively. Then,
\[
    \begin{cases}
        A = ab \\
        P = 2 (a+b)
    \end{cases} 
    \iff
    \begin{cases}
        a = \frac{P \pm \sqrt{P^2 - 16 A}}{4}\\
        b = \frac{P \mp \sqrt{P^2 - 16 A}}{4}
    \end{cases}
    .
\]
For our simulations, we choose \(a = \frac{P + \sqrt{P^2 - 16 A}}{4}\) and \(b = \frac{P - \sqrt{P^2 - 16 A}}{4}\), since the choice is unimportant. 
Then, substituting in \eqref{upper_bound}, we have a formula that depends just on the area and perimeter. Our numerical results suggest that this procedure leads to an upper bound valid for any quadrilateral.

\begin{conj}\label{bound_conjecture}
  Let $Q$ be a quadrilateral with area $A$ and perimeter $P$. Then, for every $m\geq0,$ we have 
    \begin{equation}\label{upper_bound_quad}
\begin{aligned}
\lefteqn{
        \lambda_1(Q)^2 - m^2 
        }
        \\
        &&\leq  \min \left\{\left(\frac{\nu_1(m  \frac{P +  \sqrt{P^2 - 16 A}}{4})}{ \frac{P +  \sqrt{P^2 - 16 A}}{4}}\right)^2 + \left(\frac{\pi}{\frac{P -  \sqrt{P^2 - 16 A}}{4}}\right)^2, \left(\frac{\pi}{ \frac{P + \sqrt{P^2 - 16 A}}{4}}\right)^2 + \left(\frac{\nu_1(m \frac{P -  \sqrt{P^2 - 16 A}}{4})}{\frac{P -  \sqrt{P^2 - 16 A}}{4}}\right)^2\right\}
        \,.
\end{aligned}        
    \end{equation}
\end{conj}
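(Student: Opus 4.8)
The plan is to reproduce the mechanism behind the proof of Theorem~\ref{Thm} directly on the quadrilateral~$Q$, using the variational characterisation of $\lambda_1(Q)^2$ (cf.~\eqref{variational}) together with the analogue of the identity~\eqref{non-trivial}, namely $\|H_Q u\|^2 = \|\nabla u\|^2 + m^2\|u\|^2 + m\,\|\gamma u\|^2$, which is valid on any bounded Lipschitz domain. Writing $a = \tfrac14\bigl(P+\sqrt{P^2-16A}\bigr)$ and $b = \tfrac14\bigl(P-\sqrt{P^2-16A}\bigr)$, so that $a+b=P/2$ and $ab=A$, one has the rectangle $R_{a,b}$ with exactly these area and perimeter, and the two trial functions used in the proof of Theorem~\ref{Thm} — namely $u_R(x_1,x_2) = \varphi_1(x_1)\cos(\tfrac{\pi}{b}x_2)$ and its partner with the roles of the two directions exchanged, where $\varphi_1$ is the lowest eigenfunction of the one-dimensional Dirac operator~$H_a$ — are precisely the objects that realise the right-hand side of~\eqref{upper_bound_quad} on $R_{a,b}$. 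The goal is therefore to transplant these functions to $Q$ without increasing the Rayleigh quotient.

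First I would fix a diffeomorphism $\Phi : R_{a,b}\to Q$ sending the two short sides of $R_{a,b}$ onto one chosen pair of opposite sides of $Q$ and the two long sides onto the other pair; the natural candidate is the bilinear map determined by the four vertices of $Q$, whose coordinate lines foliate $Q$ by the straight segments joining corresponding points of opposite sides. Pulling back the Euclidean metric under $\Phi$ converts the Rayleigh quotient on $Q$ into a quotient of weighted Dirichlet-type integrals on $R_{a,b}$, the weights being built from $D\Phi$ and $\det D\Phi$; one then inserts the transplanted trial spinor $u_R\circ\Phi^{-1}$, estimates the resulting weighted expression from above, and compares with $(\nu_1(ma)/a)^2+(\pi/b)^2$. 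Running the same argument with the other trial function and taking the minimum would give~\eqref{upper_bound_quad}.

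The main obstacle — and the reason the statement is only conjectured — is twofold. Analytically, the weight factors produced by a generic $\Phi$ do not collapse to the rectangle expression: a non-conformal change of variables generates cross-derivative terms, and the segments of the foliation have lengths that vary with the transverse variable, both of which destroy the exact separation of variables that makes the rectangle computation exact; it is unclear whether \emph{any} single choice of $\Phi$ makes $A$ and $P$ enter precisely in the combination appearing in~\eqref{upper_bound_quad} (the relations $a+b=P/2$, $ab=A$ hint that what is needed is an essentially one-parameter, area-preserving reduction, which I do not see how to construct). Geometrically, $u_R\circ\Phi^{-1}$ must still belong to the operator domain of $H_Q$, i.e.\ satisfy the infinite-mass condition~\eqref{bc} on one pair of sides of $Q$ and vanish on the other pair; since~\eqref{bc} is tied to the outward normal of $Q$, which $\Phi$ distorts, one must either correct the trial function near $\partial Q$ — at the cost of boundary error terms that then have to be absorbed — or restrict to diffeomorphisms that are isometries in a neighbourhood of $\partial Q$, which a generic quadrilateral does not admit.

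A reasonable way to build confidence (or to find a counterexample) before attacking the general case is to treat, first, the limit $m\to\infty$: by the non-relativistic limit~\eqref{limit}, extended to convex polygons, the inequality~\eqref{upper_bound_quad} reduces to $\lambda_1^{\mathrm{Dir}}(Q)\le(\pi/a)^2+(\pi/b)^2$ for the Dirichlet Laplacian, a scalar, real-valued statement amenable to symmetrisation and domain-deformation techniques; and, second, the model families of parallelograms and trapezoids, where the foliation by equal-length segments (respectively the explicit oblique coordinates) tames the weight factors, before attempting a general quadrilateral. One could alternatively aim at the stronger comparison $\lambda_1(Q)\le\lambda_1(R_{a,b})$, which would imply~\eqref{upper_bound_quad} via Theorem~\ref{Thm}; but this appears no easier and would still require relating $Q$ to ``its'' rectangle.
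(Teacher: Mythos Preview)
The statement you are addressing is labelled a \emph{Conjecture} in the paper, and the paper offers no proof of it whatsoever: the authors arrive at~\eqref{upper_bound_quad} by formally substituting $a=\tfrac14(P+\sqrt{P^2-16A})$ and $b=\tfrac14(P-\sqrt{P^2-16A})$ into the rectangle bound of Theorem~\ref{Thm}, and then support the resulting inequality for general quadrilaterals solely with numerical experiments (Figure~\ref{mass_quad}). There is therefore no ``paper's own proof'' against which to compare your attempt.

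Your proposal is consistent with this state of affairs: you correctly recognise that the statement is open, you identify the natural strategy (transplanting the rectangle trial functions via a diffeomorphism $\Phi:R_{a,b}\to Q$), and you honestly flag the two genuine obstructions --- the failure of separation of variables under a non-conformal change of coordinates, and the incompatibility of the pulled-back trial spinor with the infinite-mass boundary condition on $\partial Q$. These are real difficulties, not technicalities, and they are precisely why the authors leave the statement as a conjecture. Your suggested intermediate steps (the $m\to\infty$ Dirichlet limit, parallelograms and trapezoids, or the stronger comparison $\lambda_1(Q)\le\lambda_1(R_{a,b})$) are sensible research directions, but none of them is carried out, so what you have written is a discussion of possible approaches rather than a proof. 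That is appropriate here, since no proof currently exists.
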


The validity of this conjecture is supported in Figure~\ref{mass_quad},
where the first eigenvalue of randomly generated quadrilaterals 
with unit area are computed for different values of the mass 
\(m \in [0, 10]\). 
For each of the quadrilaterals~$Q$, the plot of the magnitude of 
\(\lambda_1(Q) - m^2\) against the mass is presented and accompanied by the mass-dependent upper bound~\eqref{upper_bound_quad}, as in Figures \ref{mass_3d}.(a), (b) and (c). 
For completeness, we also show the corresponding upper bound~\ref{laplacian_bound}, applying the same procedure by replacing the dependence on $a$ and $b$ by a dependence on the area and perimeter.

In Figure~\ref{mass_quad}.(b), it appears that the bound \eqref{upper_bound_quad} is almost optimal, given the proximity between the blue and green lines. In Figure~\ref{mass_quad}.(c), 
we illustrate the sensitivity of the upper bounds \eqref{laplacian_bound_eqn} and \eqref{upper_bound} to quadrilaterals with both high perimeter and non-convex shapes, resulting in a significant gap between these bounds and \(\lambda_1(Q) - m^2\). 

\begin{figure}[h]
  \centering
  \begin{tabular}{ccc}
   \includegraphics[width=0.32\textwidth]{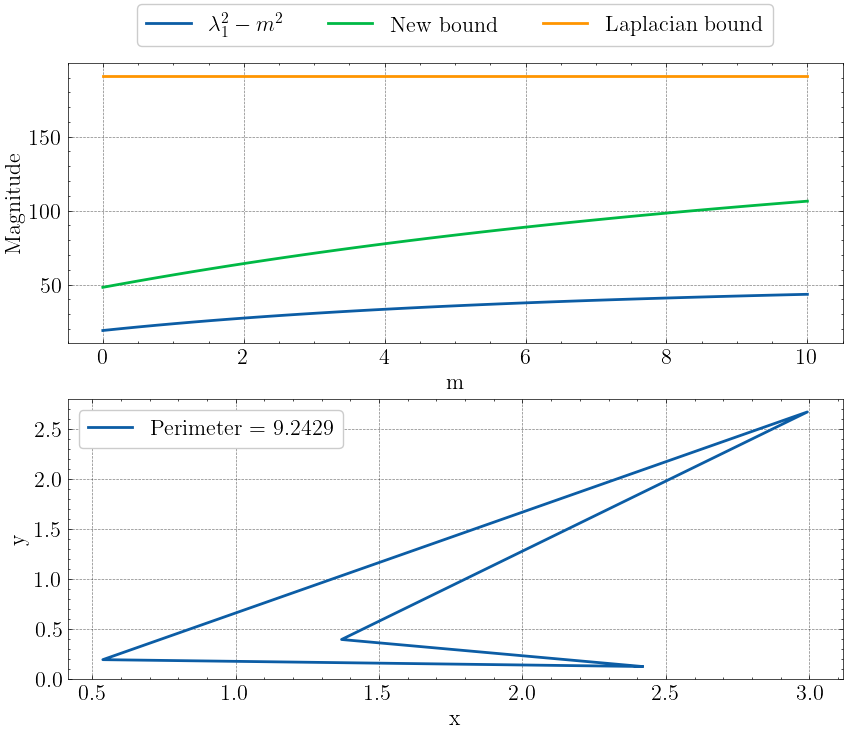}
   &\includegraphics[width=0.32\textwidth]{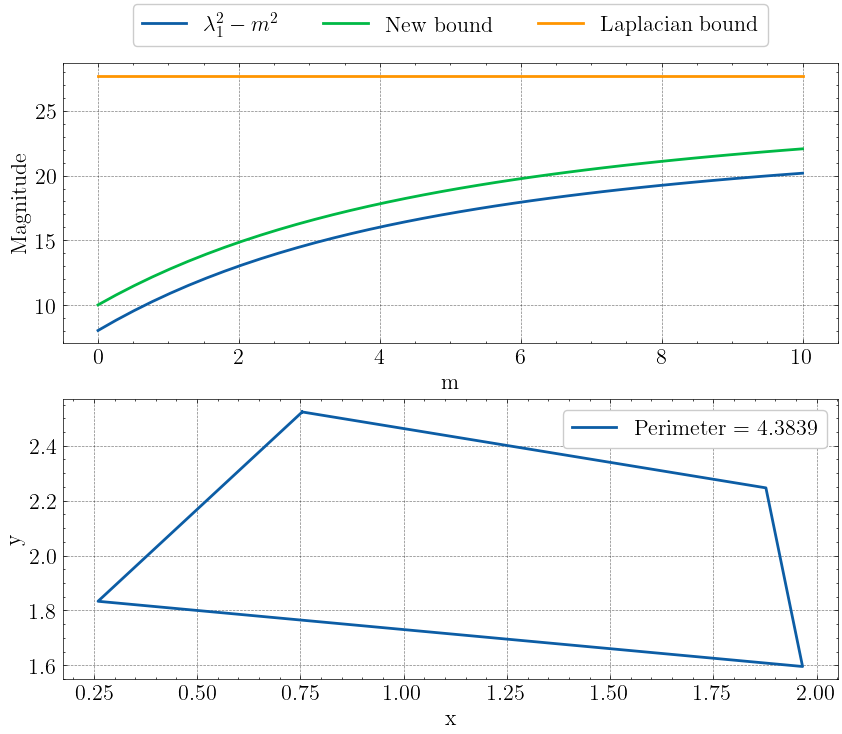}
   &\includegraphics[width=0.32\textwidth]{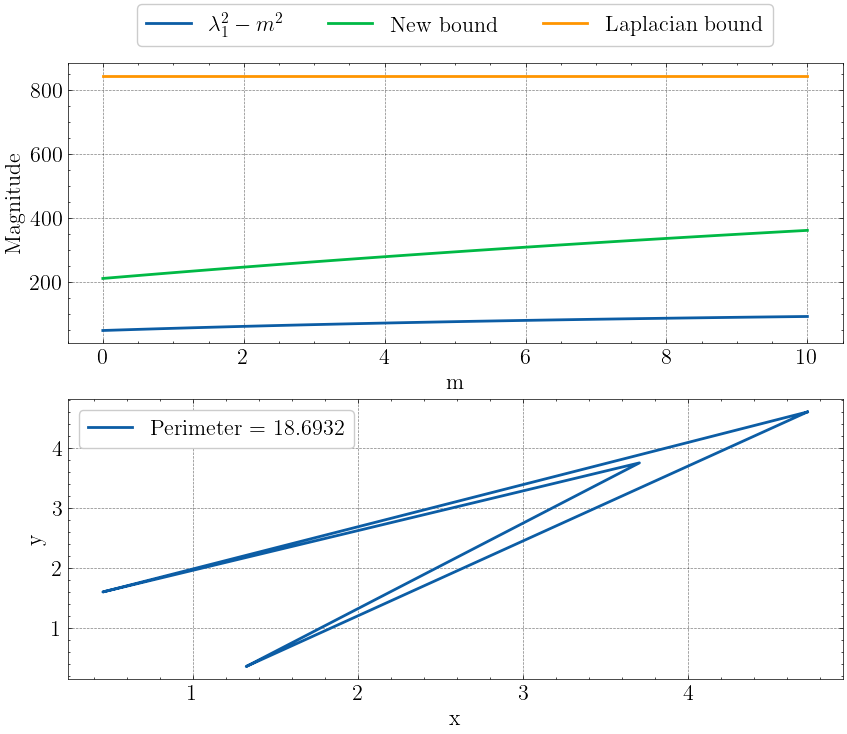}
   \\
   (a) & (b) & (c)
   \end{tabular}
\caption{Dependence of \(\lambda_1^2 - m^2\) 
and the upper bound \eqref{upper_bound_quad} on~$m$
for three quadrilaterals.}\label{mass_quad} 
\end{figure}

An interesting open problem is to show that the square
is at least a \emph{local} minimiser for the first eigenvalue
among all quadrilaterals (even just rectangles) of fixed area. 
For the Robin Laplacian with negative boundary parameter, 
an analogous question has been 
solved just recently~\cite{Larsen-Scott-Clutterbuck_2024}.
The case of triangles remains open~\cite{KLT}.

\section{Triangles}\label{Sec.tri}
%
We continue the numerical experiments 
with the Dirac operator by considering triangles.
In the non-relativistic setting, 
the approach followed in this work was previously considered in \cite{antunes2011inverse}. Instead of considering random triangles, this study can be done systematically given that, up to congruence, two parameters completely define every triangle.

\begin{defn}\label{triangle_def}
The aperture of an isosceles triangle is the angle between its equal sides. An isosceles triangle is said to be \emph{subequilateral} if the aperture is less than \(\frac{\pi}{3}\), and \emph{superequilateral} if the aperture is greater than \(\frac{\pi}{3}\). \end{defn}

\begin{figure}[h]
  \centering
  \includegraphics[width=0.45\linewidth]{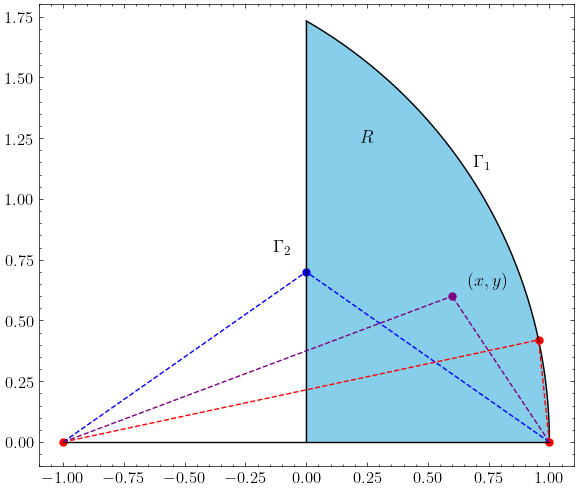}
  \caption{Configuration space of the admissible triangles. The dashed red line shows a subequilateral triangle; the dashed blue line a superequilateral triangle.}
  \label{model_triangle}
\end{figure}

Consider the \emph{admissible} region \(R\) plotted in Figure \ref{model_triangle} defined by
\[
  R = \{(x, y) \in \mathbb{R}^2: x \geq 0, y > 0, (x+1)^2 + y^2 \leq 4\}
\]
with piecewise boundary \(\partial R = \Gamma_0 \cup \Gamma_1 \cup \Gamma_2\) with
\begin{align*}
  &\Gamma_0 = \{(x, y) \in \mathbb{R}^2: 0 \leq x \leq 1, y = 0\},\\
  &\Gamma_1 = \{(x, y) \in \mathbb{R}^2: 0 \leq x < 1, y = \sqrt{4-(x+1)^2}\},\\
  &\Gamma_2 = \{(x, y) \in \mathbb{R}^2: x=0, 0 < y < \sqrt{3}\},
\end{align*}
such that given any triangle \(T\), there exists a triangle \(\hat{T}\) with vertices \((-1, 0), (1, 0)\) and the third vertex in~\(R\). 
  According to Definition \ref{triangle_def}, triangles with their third vertex in \(\Gamma_1\) are classified as subequilateral, and those with the third vertex in \(\Gamma_2\) are considered superequilateral. If the third vertex is within the interior of \(R\), then these triangles are characterised as scalene triangles. Such triangle \(\hat{T}\) is said to be \emph{admissible}. Notice that the admissible triangles do not have unit area; nevertheless, it suffices to scale each admissible triangle appropriately to a similar triangle with unit area.

As for the (relativistic) Dirac rectangles,
and in analogy with what is known in for the (non-relativistic)
Dirichlet Laplacian, we state the following conjecture.

\begin{conj}\label{Conj.tri}
  Let \(\lambda_1\) denote the first eigenvalue of~$H$ on a triangle. 
  Then
  \begin{itemize}
    \item among all triangles of fixed area,
    $\lambda_1$ is minimised by the equilateral triangle;
    \item among all triangles of fixed perimeter,
    $\lambda_1$ is minimised by the equilateral triangle. 
  \end{itemize}
\end{conj}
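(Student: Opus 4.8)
The plan is to start from the variational reformulation of the squared eigenvalue. The computation behind the identity~\eqref{non-trivial} is local and does not use the rectangular shape, so it extends verbatim to any convex polygon~$\Omega$ (triangles in particular, where the operator domain is genuinely $H^1$): for every $u$ in the domain of $H_\Omega$ one has $\|H_\Omega u\|^2 = \|\nabla u\|^2 + m^2\|u\|^2 + m\,\|\gamma u\|^2$, with $\gamma$ the Dirichlet trace. Since $\lambda_1(\Omega)^2 = \min\sigma(H_\Omega^2) = \inf \|H_\Omega u\|^2/\|u\|^2$ over the operator domain, this yields
\begin{equation*}
  \lambda_1(\Omega)^2 - m^2 = \inf_{u} \frac{\|\nabla u\|^2 + m\,\|\gamma u\|^2}{\|u\|^2},
\end{equation*}
the infimum running over nonzero spinors $u \in H^1(\Omega;\Com^2)$ subject to the infinite-mass coupling~\eqref{bc}. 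As $|n_1+in_2|=1$, condition~\eqref{bc} forces $|u_2|=|u_1|$ on $\partial\Omega$, so the boundary term is $2m\int_{\partial\Omega}|u_1|^2$ and the problem reads as a Laplacian eigenvalue problem with a \emph{positive} Robin-type boundary potential, the two components being tied together only through the normal-dependent phase in~\eqref{bc}. This reduced functional is the object I would try to optimise over triangles.

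I would then attempt the classical two-step reduction used for the Dirichlet Laplacian, organised by the configuration space~$R$ of Figure~\ref{model_triangle}. \textbf{Step one:} among triangles of fixed area sharing a fixed base, show that the isosceles triangle, whose apex lies on the perpendicular bisector of the base, minimises $\lambda_1$ (a partial symmetrisation in the base direction). \textbf{Step two:} restrict to isosceles triangles, parametrise them by the aperture as in Definition~\ref{triangle_def}, and prove that $\lambda_1$ is strictly monotone on the subequilateral and superequilateral ranges with a unique minimum at aperture $\pi/3$. Step two is one-dimensional and could be handled by differentiating $\lambda_1$ along the family through a Hadamard-type shape-derivative formula and pinning down the sign. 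One warning: the perimeter statement does \emph{not} follow from the area statement by scaling, unlike for the Laplacian, because the mass breaks scale invariance through $\lambda_1(t\Omega;m) = t^{-1}\lambda_1(\Omega;tm)$; the two-step scheme must therefore be run independently under each constraint.

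A more modest but fully rigorous intermediate target is local minimality. Assuming $\lambda_1$ is simple at the equilateral triangle (which the disk data in Table~\ref{tab_eigenvalues_unit_area_disk} and the rectangle plots make plausible), I would compute the first shape derivative of $\lambda_1$ under area-preserving vector fields; the full $D_3$ symmetry of the equilateral triangle together with the symmetry of its ground state should make this first derivative vanish, so the equilateral triangle is a critical point. Positivity of the second shape derivative on area-preserving perturbations, modulo the directions generated by the symmetries, would then give local minimality. This is where most of the analytic effort would sit, chiefly in controlling the corner contributions to the boundary terms.

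The main obstacle is precisely the one emphasised in the paper: no symmetrisation is available for the Dirac operator, and this blocks Step one. The difficulty is structural rather than technical. In the reduced problem the boundary phase $i(n_1+in_2)$ rotates with the outward normal, so the reflection across the perpendicular bisector that proves the isosceles case for the Dirichlet Laplacian does not preserve the operator domain: folding a spinor reverses the normal and hence the coupling, so the reflected trial function violates~\eqref{bc}. Any proof will thus require either a genuinely new relativistic rearrangement that transports the spinor phase correctly, or a way to import known symmetrisation results for the Robin Laplacian through the reformulation above. The latter is not automatic, since here the boundary term \emph{raises} the energy and couples the two components, in contrast with the scalar Robin setting. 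Until such a tool is found, the conjecture is likely to remain, as stated, supported by numerics alone.
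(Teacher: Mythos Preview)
This statement is a \emph{Conjecture} in the paper; no proof is given there, only numerical evidence (Figures~\ref{triangle_first_eigenvalue} and~\ref{triangle_3d_first_eigenvalue}). Your text is not a proof either, and you say so yourself: you sketch the classical two-step programme (reduce to isosceles by a partial symmetrisation, then optimise over the aperture), correctly identify the structural obstruction (the phase $i(n_1+in_2)$ in~\eqref{bc} is tied to the outward normal, so reflecting across the perpendicular bisector sends the operator domain to that of the \emph{reversed} boundary condition rather than to itself), and conclude that the conjecture is likely to remain open. That diagnosis coincides with the paper's.

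One point deserves correction. You assert that the perimeter statement does \emph{not} follow from the area statement because the mass breaks scale invariance; the paper asserts the contrary, and the paper is right. From your own scaling law $\lambda_1(t\Omega;m)=t^{-1}\lambda_1(\Omega;tm)$ one checks that $t\mapsto\lambda_1(t\Omega;m)$ is strictly decreasing: by Feynman--Hellmann, $\partial_\mu\lambda_1(\Omega;\mu)=\|u_1\|^2-\|u_2\|^2\in(-1,1)$ for a normalised first eigenfunction, while $\lambda_1(\Omega;\mu)>\mu$ always, so $\mu\,\partial_\mu\lambda_1<\mu<\lambda_1$, which is precisely the differential inequality guaranteeing that $t^{-1}\lambda_1(\Omega;tm)$ decreases in $t$. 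Combined with the isoperimetric inequality for triangles (the equilateral maximises area at fixed perimeter), the area statement for all $m\geq0$ therefore \emph{does} imply the perimeter statement for all $m\geq0$; your two-step scheme would not need to be run separately under each constraint.
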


Figures~\ref{triangle_first_eigenvalue} 
and~\ref{triangle_3d_first_eigenvalue} show our numerical results 
for the first eigenvalue for \(m=1\) under the area constraint. 
In general, our numerical experiments support the validity 
of Conjecture~\ref{Conj.tri} 
(the isoperimetric optimality follows as a consequence
of the area constraint considered).

\begin{figure}[h]
  \centering
  \begin{minipage}{.5\textwidth}
    \centering
    \includegraphics[width=0.95\linewidth]{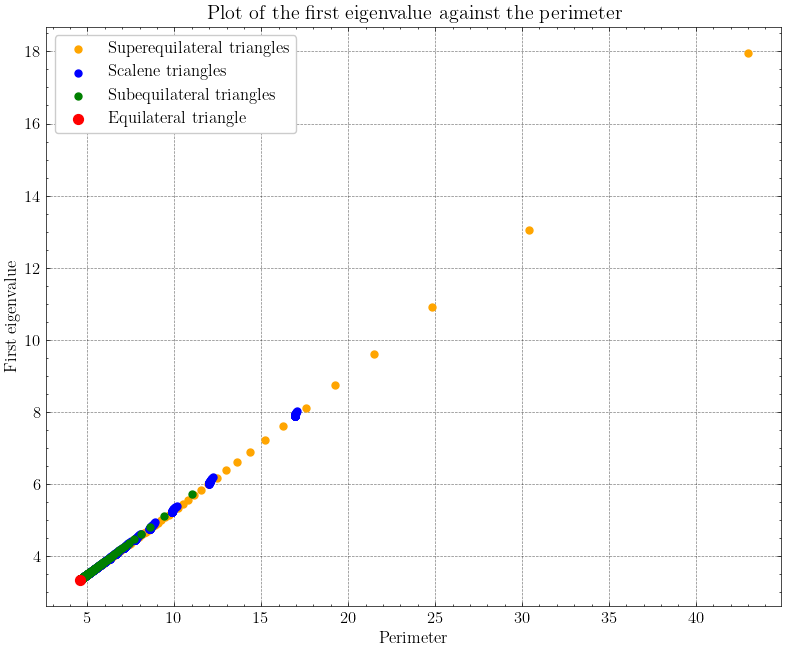}
    \captionsetup{width=0.9\linewidth}
    \captionof{figure}{Plot of the first eigenvalue against the perimeter for triangular domains.}
    \label{triangle_first_eigenvalue}
  \end{minipage}%
  \begin{minipage}{.5\textwidth}
    \centering
    \includegraphics[width=0.9\linewidth]{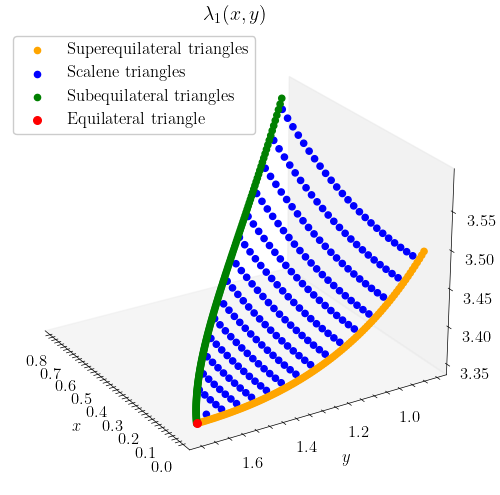}
    \captionsetup{width=0.9\linewidth}
    \captionof{figure}{Plot of the first eigenvalue \(\lambda_1(x, y)\) for each sample point \((x, y) \in R\).}
    \label{triangle_3d_first_eigenvalue}
  \end{minipage}
\end{figure} 

Figure~\ref{triangle_u} shows
the absolute value of the eigenfunction components~$u_1$ and~$u_2$
associated with the first eigenvalue in the equilateral triangle
with $m=1$.  

\begin{figure}[h]
\begin{center}
\begin{tabular}{cc}
    \includegraphics[width=0.48\linewidth]{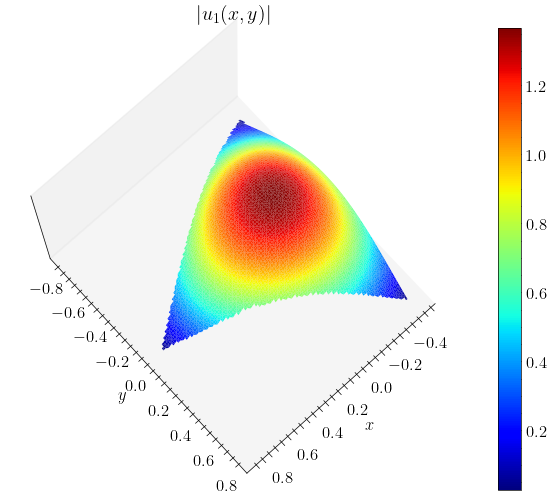}
    &
    \includegraphics[width=0.48\linewidth]{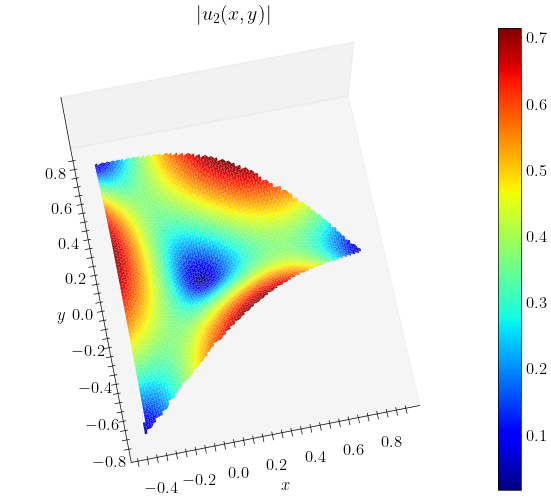}
\end{tabular}
\caption{Absolute values of the first eigenfunction components 
of the equilateral triangle for $m=1$.}\label{triangle_u}  
\end{center} 
\end{figure}
%

\section{Polygons}\label{Sec.poly}
%
Going from rectangles and triangles to arbitrary polygons,
we state the following general conjecture.

\begin{conj}\label{polygons_conjecture}
Given any $n \geq 1$,
 let $P_n$ be any $n$-sided polygon 
 and $P_n^*$ be the $n$-sided regular polygon, 
 both with fixed area (or perimeter). 
 Then \(\lambda_1 (P_n)\geq \lambda_1 (P_n^*)\).
\end{conj}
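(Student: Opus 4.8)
The natural plan is to imitate the proof of the P\'{o}lya--Szeg\H{o} inequality for the Dirichlet Laplacian and to transplant it to the Dirac setting through the identity~\eqref{non-trivial}. First, I would rewrite the variational characterisation~\eqref{variational} of $\lambda_1(P_n)^2$ as the minimisation of the functional
\[
  Q_m[u] := \|\nabla u\|^2 + m^2\,\|u\|^2 + m\,\|\gamma u\|^2
\]
over the spinors $u \in H^1(P_n;\Com^2)$ that satisfy the infinite-mass condition~\eqref{bc}; on each edge of a convex polygon the latter reduces to a linear relation $u_2 = c_j u_1$ with $|c_j| = 1$ fixed by the outward normal of that edge, exactly as in~\eqref{system} for the rectangle. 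Forgetting the constraint for a moment, $Q_m$ is the quadratic form of a componentwise Laplacian with a Robin-type boundary term, a problem for which the regular polygon (respectively, the disk) is expected to be optimal; the task is then to transport a near-optimal trial spinor for an arbitrary $P_n$ to an admissible one for $P_n^*$ without increasing $Q_m$.

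Second, I would try to realise this transport through an area-preserving (or perimeter-preserving) deformation of $P_n$ into $P_n^*$ that stays within the class of $n$-gons: for $n = 3$ by iterated Steiner symmetrisations, whose limit is the equilateral triangle (cf.\ Conjecture~\ref{Conj.tri}); for $n = 4$ by reducing to the square as in Conjecture~\ref{rectangular_conjecture}; and for general $n$ by a flow that simultaneously equalises the side lengths and the interior angles. Along such a family $P_n(t)$ one would compute the Hadamard shape derivative of $t \mapsto \lambda_1(P_n(t))^2$, split it by means of~\eqref{non-trivial} into a bulk contribution (governed by the normal derivative of the first eigenfunction on each moving edge) and a boundary contribution coming from $m\,\|\gamma u\|^2$, and try to show that it has a definite sign. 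A more modest intermediate step --- open already for quadrilaterals, as remarked after Conjecture~\ref{bound_conjecture} --- would be to verify that $P_n^*$ is a critical point (immediate from its dihedral symmetry) with positive shape Hessian, yielding at least \emph{local} minimality; promoting this to the global statement would require a separate argument excluding all competing critical configurations.

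The decisive obstacle, and the reason the assertion is merely conjectural, is that none of the symmetrisation steps above is available for the Dirac operator. Steiner symmetrisation and polarisation rearrange the level sets of $|u_1|$ (or $|u|$), but the infinite-mass condition~\eqref{bc} couples $u_1$ and $u_2$ through the outward normal, which rearrangement does not respect: an admissible trial spinor on $P_n$ is not admissible on the symmetrised polygon, and there is no obvious substitute keeping both $\|\nabla u\|^2$ and the boundary term $m\,\|\gamma u\|^2$ under control --- the latter being particularly sensitive to the way the Dirichlet data are redistributed along $\partial P_n$. Passing to the Robin Laplacian via~\eqref{non-trivial} does not resolve the difficulty either, since discarding~\eqref{bc} only gives a \emph{lower} bound for $\lambda_1(P_n)^2$, and the minimiser of the unconstrained $Q_m$ fails to satisfy~\eqref{bc}. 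A genuinely new ingredient therefore appears to be required --- for instance a rearrangement principle for the boundary layer potentials entering the Birman--Schwinger formulation of $H$, or a monotonicity property of $\lambda_1$ under a continuous Steiner symmetrisation tailored to~\eqref{bc} --- and supplying it is exactly the open problem underlined throughout this paper. Short of that, the support for Conjecture~\ref{polygons_conjecture} is the numerical evidence presented below, complemented in the regime $m \to \infty$ by the asymptotics~\eqref{limit}, which reduce the question for large mass to the classical P\'{o}lya--Szeg\H{o} problem --- itself still only partially settled for $n \geq 5$.
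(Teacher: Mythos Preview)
The statement is a \emph{conjecture}, and the paper offers no proof of it whatsoever: the only support provided is numerical (Figures~\ref{eigen_pentagons}--\ref{eigen_octagons} and the surrounding text). Your proposal is therefore not being compared against a proof, because there is none.

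To your credit, you do not pretend otherwise. What you have written is not a proof but a survey of possible strategies (Steiner symmetrisation, shape derivatives, local minimality via the Hessian) together with an honest diagnosis of why each of them breaks down for the Dirac operator with infinite-mass boundary conditions. That diagnosis is accurate: the coupling of $u_1$ and $u_2$ through the outward normal in~\eqref{bc} is indeed the obstruction to transporting rearrangement arguments, and dropping the constraint to pass to a Robin problem yields an inequality in the wrong direction. Your closing sentence --- that the evidence is numerical, supplemented for large $m$ by the non-relativistic limit~\eqref{limit} --- is exactly the position the paper takes, though you articulate the analytic obstacles more explicitly than the paper does at this point (the paper defers such remarks to the introduction).

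In short: there is no gap to name in your argument because you have not claimed to give one; and there is no divergence from the paper's approach because the paper, like you, stops at numerics for this statement.
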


For triangles and quadrilaterals (at least rectangles),
the validity of the conjecture is supported already 
in Sections~\ref{Sec.rect} and~\ref{Sec.tri}, respectively.
Our numerical experiments support the validity 
of the conjecture also for more-sided polygons.
In particular, 
in Figures~\ref{eigen_pentagons}, \ref{eigen_hexagons}, 
\ref{eigen_heptagons} and \ref{eigen_octagons},
we show our results for convex pentagons, hexagons, heptagons
and octagons with \(m = 1\), respectively.

In Figure~\ref{vertices_magnitude}, 
we plot the first three eigenvalues of the regular \(n\)-sided polygons with fixed unit area for \(n=3,4,5,6,7,8\), with \(m=1\). 
In the dashed red lines the corresponding eigenvalues of the unit area disk are also presented. 

\begin{figure}[h]
  \centering
  \includegraphics[width=0.6\linewidth]{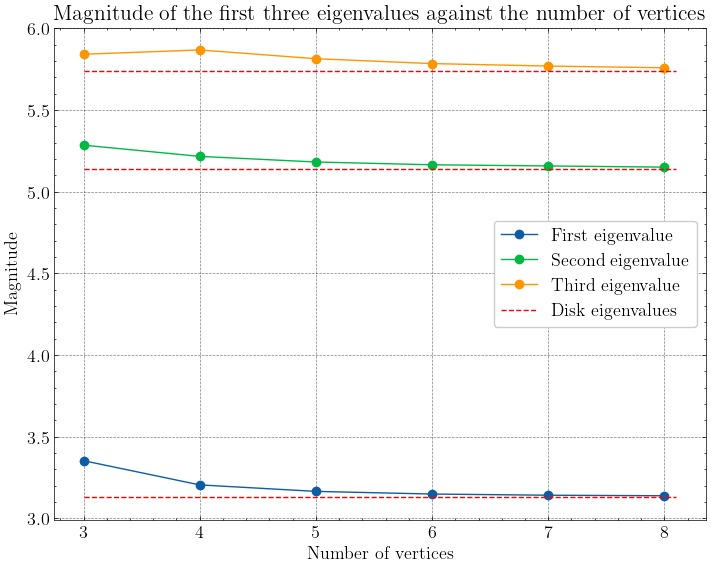}
  \caption{Comparison of the first three eigenvalues 
  of regular \(n\)-sided polygons against the disk, with fixed unit area.}
  \label{vertices_magnitude}
\end{figure}
\begin{figure}[t]
  \centering
  \begin{minipage}{.5\textwidth}
    \centering
    \includegraphics[width=0.9\linewidth]{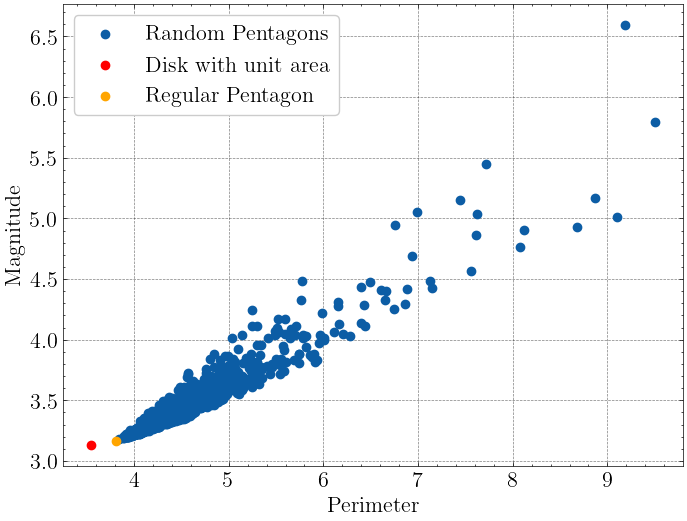}
    \captionsetup{width=0.9\linewidth}
    \captionof{figure}{Numerical simulations for the first eigenvalue of convex pentagons.}
    \label{eigen_pentagons}
  \end{minipage}%
  \begin{minipage}{.5\textwidth}
    \centering
    \includegraphics[width=0.9\linewidth]{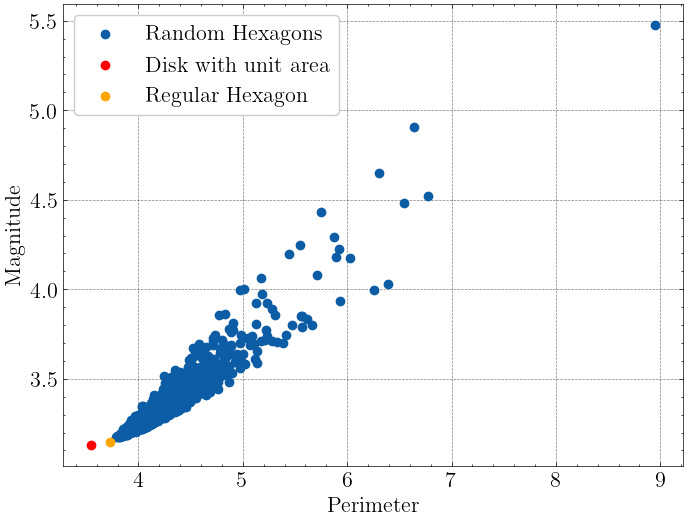}
    \captionsetup{width=0.9\linewidth}
    \captionof{figure}{Numerical simulations for the first eigenvalue of convex hexagons.}
    \label{eigen_hexagons}
  \end{minipage}
  \vfill
  \vspace*{0.5cm}
  \centering
  \begin{minipage}{.5\textwidth}
    \centering
    \includegraphics[width=0.9\linewidth]{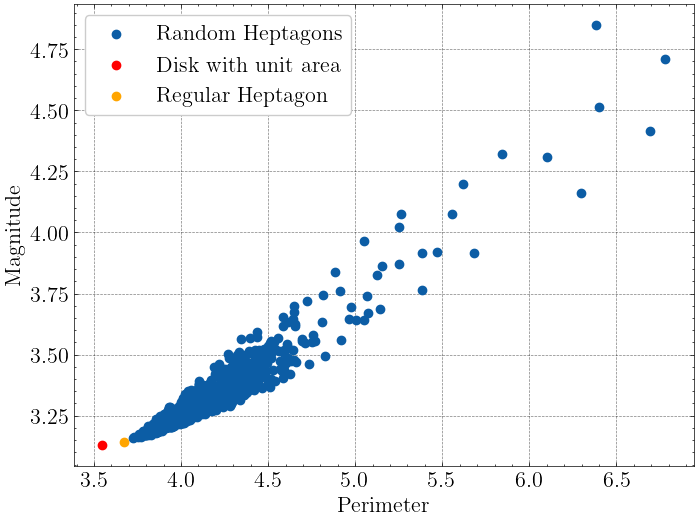}
    \captionsetup{width=0.9\linewidth}
    \captionof{figure}{Numerical simulations for the first eigenvalue of convex heptagons.}
    \label{eigen_heptagons}
  \end{minipage}%
  \begin{minipage}{.5\textwidth}
    \centering
    \includegraphics[width=0.88\linewidth]{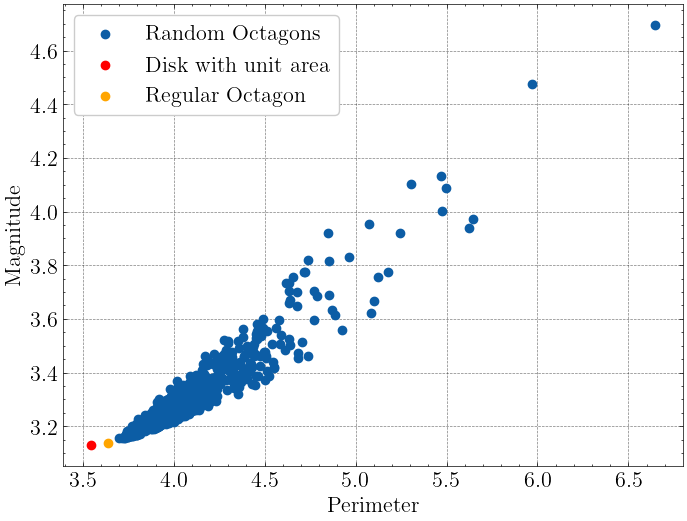}
    \captionsetup{width=0.9\linewidth}
    \captionof{figure}{Numerical simulations for the first eigenvalue of convex octagons.}
    \label{eigen_octagons}
  \end{minipage}
\end{figure}
%

\section{General domains}\label{Sec.general}
%
Finally, we consider the spectral optimisation 
of the Dirac operator for general domains
as regards the disk.
Since this problem for the first eigenvalue 
was already investigated in~\cite{AKO}
(finding, numerically, that~$\lambda_1$ is indeed minimised by the disk,
among all domains of fixed area),
we focus on higher eigenvalues.

\subsection{The second eigenvalue}\label{subsection_second_eigenvalue}
Let us start with searching for a domain with unit area 
minimising the second eigenvalue.
Let \(\mathcal{B}\) be the set of all open and bounded sets of \(\mathbb{R}^2\) with unit area. Then, we define the functional 
$$
    \mathcal{F}_2: \, \mathcal{B} \rightarrow \mathbb{R} 
    : \{\Omega \mapsto \lambda_2\left(\Omega\right) \}
$$
and consider the minimisation problem
\begin{equation}\label{min_problem_eigen_2}
  \min_{\Omega \in \mathcal{B}} \mathcal{F}_2(\Omega).
\end{equation} 

For practical purposes, we consider the map
$$
    \mathcal{P}: \, \mathbb{R}^{2M+1} \rightarrow \mathcal{B}
    : \{ (a_0,a_1,...,a_M,b_1,...,b_M) \mapsto \hat{\Omega} \},
$$
where $\hat{\Omega}$ is a domain (with unit area) whose boundary can be parameterised by
\begin{equation}
\label{new_radial}
\left\{r_N(t) \left(\cos(\theta),\sin(\theta)\right),\ \theta\in[0,2\pi[\right\},
\end{equation}
where
\begin{equation}\label{radial_param}
r_N(t)=\sqrt{\frac{2}{\pi}}\frac{a_0 + \sum_{m=1}^{M}a_m \cos(m \theta) + \sum_{m=1}^{M}b_m \sin(m \theta)}{\left(2a_0^2 + \sum_{m=1}^{M}a_m^2 + \sum_{m=1}^{M}b_m^2\right)^{\frac{1}{2}}}.
\end{equation}
It is straightforward to verify that domains with boundary parameterised in this way have always unit area, independently of the coefficients $a_m$ and $b_m$.
Then, the optimisation problem \eqref{min_problem_eigen_2} can be discretised into
\begin{equation}\label{min_problem_eigen_2_disc}
  \min_{v \in \mathbb{R}^{2M+1}} \mathcal{F}_2\left(\mathcal{P}(v)\right).
\end{equation}

Problem \eqref{min_problem_eigen_2_disc} can then be solved using direct search methods. In this work, the Nelder--Mead method is used (we refer to \cite{nelder1965simplex}). This method does not need any information on the derivative and relies solely on evaluations of the objective function in order to find the local minima. In general, one could start with a non-radial domain, not defined through the radial parameterisation \eqref{new_radial}. In that case, the coefficients \(a_0, a_1,\dots, a_M,b_1,\dots,b_M\) of such domain can be found by discretising the boundary of the domain, changing from Cartesian to polar coordinates (if needed), and using least-square methods to solve an over-determined system, yielding a radial approximation of the initial domain.

\begin{figure}[h]
  \centering
  \begin{minipage}{.5\textwidth}
    \centering
    \includegraphics[width=1\linewidth]{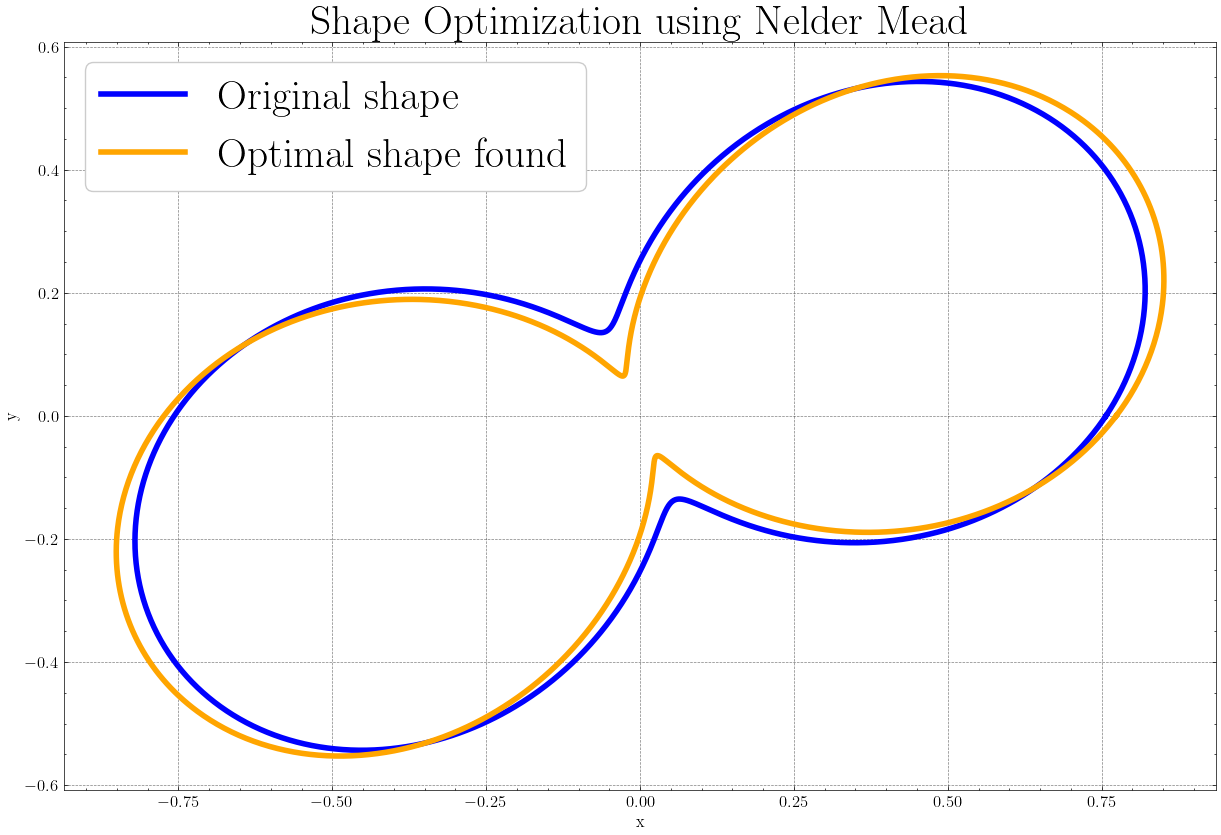}
    \captionsetup{width=0.9\linewidth}
    \captionof{figure}{Original domain used in the first iteration of the Nelder--Mead method and the optimal domain \(\Omega^\star\) found, with \(\lambda_2(\Omega^\star) \approx 4.2326443\), for \(m=1\).}
    \label{second_eigenvalue_mass_1}
  \end{minipage}%
  \begin{minipage}{.5\textwidth}
    \centering
    \includegraphics[width=1\linewidth]{Images/second_eigenvalue_mass_1.png}
    \captionsetup{width=0.9\linewidth}
    \captionof{figure}{Original domain used in the first iteration of the Nelder--Mead method and the optimal domain \(\Omega^\star\) found, with \(\lambda_2(\Omega^\star) \approx 7.1037228\), for \(m=5\).}
    \label{second_eigenvalue_mass_5}
  \end{minipage}
\end{figure}

Figures \ref{second_eigenvalue_mass_1} and \ref{second_eigenvalue_mass_5} show the optimal shape found with \(m=1\) and \(m=5\), respectively, as well as the initial shape that has the radial parameterisation
\[
  r(\theta) = 1 + \frac{\sin(2\theta)}{2} + \frac{\cos(2\theta)}{2}.
\] 
To mitigate potential dependencies on the initial shape which could influence the optimal domain, several additional simulations were carried out using different starting domains and collocation/source points for the Method of Fundamental Solutions. The domain presented not only exhibited the lowest second eigenvalue but also shared a very similar shape to the other domains identified in various simulations. Unfortunately, it is very hard to impose topological conditions on the optimisation algorithm and the simulations show that the method tries to close the gap between the cusps in Figures \ref{second_eigenvalue_mass_1} and \ref{second_eigenvalue_mass_5}, resulting in a disjoint domain. However, since we are working with only one connected domain, this forces the domain to auto-intersect, creating ties which are not addressed by the MFS. In the same vein as in the relativistic analogue of the Faber--Krahn inequality, and given the resemblance of the optimal found shape with two disjoint disks with the same area, we also conjecture that a relativistic analogue of the Krahn--Szeg\"{o} inequality also holds for the Dirac operator with infinite mass boundary conditions. For future reference, the first and second eigenvalues of two disjoint disks with unitary total area for $m=1$ are \(\lambda_1 = \lambda_2 \approx 4.16799436\). These results suggest that an analogous result to the Krahn inequality for the classical Laplacian \cite{Krahn} shall hold for Dirac eigenvalues.

\begin{conj}
The minimiser of the second eigenvalue of~$H$ among all planar bounded open sets  with unit area is the union of two disjoint disks both with area \(\frac{1}{2}\).
\end{conj}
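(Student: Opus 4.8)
The statement is the relativistic counterpart of the Hong--Krahn--Szeg\H{o} (Krahn--Szeg\H{o}) inequality, and the natural plan is to transplant the classical proof, isolating the two ingredients it needs. The first ingredient is the relativistic Faber--Krahn inequality, i.e.\ the affirmative answer to question~\hyperref[Q1]{\textbf{(Q1)}}: the disk minimises~$\lambda_1$ among all planar sets of fixed area. This is itself open, and since the present conjecture manifestly implies it (a minimiser of~$\lambda_2$ that is a disjoint union of two disks must have each disk minimising~$\lambda_1$ for its own area), one cannot expect to prove the conjecture without it, so I would take it for granted. Granting it, the classical scheme runs as follows. Let $\Omega$ be a minimiser of~$\lambda_2$ among unit-area sets, whose existence would have to be established separately in a suitably relaxed class. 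If $\Omega$ is disconnected, the spectrum of~$H$ on~$\Omega$ is the reordered union of the spectra on its components, and a short rearrangement argument reduces matters to two components. If $\Omega$ is connected, one would use the second eigenfunction to cut~$\Omega$ into two pieces $\Omega_1,\Omega_2$ on each of which the restricted object realises the first eigenvalue, so that $\lambda_1(\Omega_i)\le\lambda_2(\Omega)$ for $i=1,2$. In either case, the relativistic Faber--Krahn inequality lets one replace $\Omega_1,\Omega_2$ by disks of the same areas without increasing the relevant eigenvalue, so the infimum of~$\lambda_2$ is attained on a disjoint union of two disks of total area~$1$.

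It then remains to optimise over the two radii. Writing $D_r$ for the disk of radius~$r$ and $r_*:=1/\sqrt{2\pi}$ for the radius of area~$\tfrac12$, the scaling $x\mapsto x/r$ gives $\lambda_k(D_r;m)=r^{-1}\lambda_k(\mathbb{D};mr)$; moreover $\mu\mapsto\lambda_k(\mathbb{D};\mu)$ is increasing (the form domain of~$H$ does not depend on~$m$, and by the splitting of $\|Hu\|^2$ into a gradient part and a non-negative, $m$-monotone boundary part, as in~\eqref{non-trivial}, raising~$m$ raises the Rayleigh quotient~\eqref{variational}). Together with the non-relativistic asymptotics $\lambda_k(\mathbb{D};\mu)^2-\mu^2\to d_k$, where $d_k$ is the $k$-th Dirichlet eigenvalue of~$\mathbb{D}$, which already follows from Theorem~\ref{laplacian_bound}, this makes $r\mapsto\lambda_k(D_r;m)$ decreasing. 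For $r_1\ge r_2$ with $\pi(r_1^2+r_2^2)=1$ one has $\lambda_2(D_{r_1}\sqcup D_{r_2};m)=\min\{\lambda_1(D_{r_2};m),\lambda_2(D_{r_1};m)\}$; the first term strictly exceeds $\lambda_1(D_{r_*};m)$ whenever $r_2<r_*$, and so does the second, the one regime needing care being $r_2\to0$, $r_1\to1/\sqrt{\pi}$, where after scaling the required inequality degenerates in the limit $m\to\infty$ to $d_2>2d_1$, which holds since $j_{1,1}^2>2j_{0,1}^2$. Hence equal radii is the strict minimiser, the optimum being two disks of area~$\tfrac12$, with value $\sqrt{2\pi}\,\lambda_1(\mathbb{D};m/\sqrt{2\pi})$ of multiplicity two. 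In principle this step is a finite, if slightly fiddly, computation on the disk.

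The real obstacle is the cutting step for a connected minimiser, and here the relativistic setting genuinely resists. Unlike a real Dirichlet eigenfunction, an eigenfunction of~$H$ is a $\Com^2$-valued spinor which, by unique continuation, never vanishes on~$\Omega$, so there is simply no nodal set along which to cut; and cutting along an ad hoc interface is useless, because the two pieces inherit neither the infinite-mass condition~\eqref{bc} on the new boundary nor any variational domination by their own first eigenvalue. Worse, the infinite-mass realisation of~$H$ is \emph{not} monotone under domain inclusion, which removes the usual fallback arguments. A genuine proof would seem to require a new tool --- a spinorial analogue of Courant's nodal-domain theorem adapted to first-order elliptic systems with boundary conditions of the type~\eqref{bc}, or a fundamentally different way to split the quadratic form $u\mapsto\|Hu\|^2$ --- which is exactly the kind of apparatus the paper argues is currently missing. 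A realistic intermediate target is to prove the statement conditionally on~\hyperref[Q1]{\textbf{(Q1)}} within the subclass of sets having at most two connected components, where the cutting step is vacuous and the argument becomes unconditional modulo the existence of a minimiser; establishing that existence in an appropriate relaxed class is a separate, and also non-trivial, task.
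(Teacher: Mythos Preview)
The paper does \emph{not} prove this statement: it is explicitly recorded as a conjecture, motivated entirely by the numerical shape-optimisation experiments of Section~\ref{subsection_second_eigenvalue} (Figures~\ref{second_eigenvalue_mass_1} and~\ref{second_eigenvalue_mass_5}), and no analytic argument is offered. So there is no ``paper's own proof'' to compare against.

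Your write-up is therefore not competing with a proof but with a numerical observation, and you correctly recognise this. Your analysis of what a proof would require is sound and well organised: (i) the relativistic Faber--Krahn inequality~\hyperref[Q1]{\textbf{(Q1)}} is a prerequisite (indeed, as you note, the present conjecture implies it); (ii) the optimisation over two disks is a concrete one-parameter problem on the disk spectrum; and (iii) the crucial missing ingredient is a ``cutting'' step for a connected candidate minimiser. Your diagnosis of (iii) is exactly the obstruction the paper alludes to in the introduction: Dirac eigenspinors have no nodal set to cut along, the infinite-mass boundary condition~\eqref{bc} is not inherited on an artificial interface, and there is no known domain monotonicity to fall back on. This is a genuine gap in the available toolkit, not a gap in your reasoning.

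Two small caveats on your sketch of step~(ii). First, the endpoint inequality you isolate, $\lambda_2$ of the unit-area disk versus $\lambda_1$ of the half-area disk, does not reduce to a single mass-independent check because Dirac scaling couples the radius and~$m$; it would need to be verified for every~$m\ge 0$ (your large-$m$ argument via $j_{1,1}^2>2j_{0,1}^2$ handles only the non-relativistic limit). Second, the monotonicity of $\mu\mapsto\lambda_k(\mathbb{D};\mu)$ that you invoke from~\eqref{non-trivial} is correct, but the conclusion that $r\mapsto\lambda_k(D_r;m)$ is decreasing does not follow from that monotonicity together with the asymptotics alone; it needs a short separate argument. Neither point affects your overall assessment that the conjecture is currently out of reach.
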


\subsection{The third eigenvalue}
As discussed in Section~\ref{Sec.rect}, 
from Figure~\ref{eigen_rectangle_area_m_1}, is possible to verify that the square is not the global minimiser for the third eigenvalue if \(m=1\), 
among all rectangles with fixed area. 
This is in striking difference with the non-relativistic setting,
i.e.\ the Dirichlet Laplacian.
For the latter, it is conjectured that the third eigenvalue 
is minimised by the disk, among all domains of fixed area.
How is it in the relativistic setting of the Dirac operator?
 
Figures \ref{third_eigenvalue_mass_1} and \ref{third_eigenvalue_mass_1_mink} summarise the shape optimisation results after applying the Nelder--Mead algorithm, used to find the domain which minimises the third eigenvalue, as explained in Section~\ref{subsection_second_eigenvalue}. Figure \ref{third_eigenvalue_mass_1}, shows the optimal shape \(\Omega^\star\) founded for which we obtained \(\lambda_3(\Omega^\star) \approx 5.62889189\) for \(m=1\). This result 
is a strong numerical support for our conjecture that
the minimiser is not the disk \(\mathbb{D}\).
Figure \ref{third_eigenvalue_mass_1_mink} shows the plots of the first three eigenvalues of the Minkowski sum of $\mathbb{D}$ and \(\Omega^\star\), i.e, the eigenvalues of each domain \(\Omega(t)\) was computed, where \(\Omega(t)\) is given by
\[
  \Omega(t) = (1 - t)\mathbb{D} + t \Omega^\star
\]
for \(t \in [0, 1]\). 
Again, these results illustrate that the third eigenvalue  
under the area constraint is not minimised by the disk.

\begin{conj}
There exists masses $m \geq 0$ such that
the minimiser of the third eigenvalue of~$H$ among all planar bounded open sets  with fixed area is not the  disk.
\end{conj}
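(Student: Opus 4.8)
The plan is to \emph{disprove} the disk as the minimiser at a single, explicitly chosen mass; since the conjecture only asserts the existence of one such mass, it suffices to exhibit a mass $m_0$ and an admissible competitor domain $\Omega_0$, of the same area as the disk $\mathbb{D}$ under consideration, for which $\lambda_3(\Omega_0) < \lambda_3(\mathbb{D})$. Guided by the numerics, the natural candidate for $m_0$ is $m_0 = 1$, and for $\Omega_0$ either a rectangle of suitable aspect ratio or a simple explicit surrogate for the peanut-shaped optimiser found in Figure~\ref{third_eigenvalue_mass_1}.

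First I would derive a certified lower bound $\lambda_3(\mathbb{D}) \geq \Lambda_-$. On the disk the spectral problem separates in polar coordinates, so the positive eigenvalues are, angular sector by angular sector, the roots of transcendental equations in the Bessel functions $J_k$, $J_{k+1}$ evaluated at $\sqrt{\lambda^2 - m^2}$. Enumerating the three smallest eigenvalues and bounding the relevant Bessel zeros from below via the classical interlacing and monotonicity properties of Bessel zeros, supplemented if necessary by validated interval arithmetic, yields the value $\Lambda_-$.

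Next I would produce a certified upper bound $\lambda_3(\Omega_0) \leq \Lambda_+$ from the min--max principle
\[
  \lambda_3(\Omega_0)^2 = \min_{\dim V = 3}\ \max_{0 \neq u \in V} \frac{\|H_{\Omega_0} u\|^2}{\|u\|^2},
\]
combined with the analogue of~\eqref{non-trivial} for $\Omega_0$, namely $\|H_{\Omega_0} u\|^2 = \|\nabla u\|^2 + m^2\|u\|^2 + m\,\|\gamma u\|^2$. I would build an explicit three-dimensional trial space $V = \Span\{u^{(1)},u^{(2)},u^{(3)}\}$: for a rectangle, products of the one-dimensional eigenfunctions of $H_a$ and $\tilde{H}_b$ (explicit trigonometric combinations) with Dirichlet cosines, in the spirit of the proof of Theorem~\ref{Thm}; for a peanut-shaped $\Omega_0$, trigonometric or polynomial trial functions adapted to a convenient coordinate patch. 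Evaluating the three Rayleigh quotients together with the $3\times 3$ Gram and energy matrices reduces the bound to the largest root of an explicit $3\times 3$ generalised eigenvalue problem, which is then estimated from above. One concludes by checking $\Lambda_+ < \Lambda_-$ for the chosen pair $(m_0,\Omega_0)$, which forces the disk not to be the minimiser at $m = m_0$.

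The main obstacle is the construction in the previous step: the numerical margin $\lambda_3(\mathbb{D}) - \lambda_3(\Omega^\star)$ is small, so the trial space must be rich enough to capture most of it. Naive choices --- for instance pure Dirichlet--Laplacian eigenfunctions of a rectangle --- are penalised by the boundary term $m\,\|\gamma u\|^2$, which weights functions that fail to vanish on $\partial\Omega_0$; this is precisely the mechanism by which the refined trial functions of Theorem~\ref{Thm} improve on the older mass-independent bound, and the same idea is required here. Controlling the off-diagonal terms in both the energy form and the Gram matrix so that the $3\times 3$ problem admits a rigorous upper estimate is the delicate heart of the argument, and making the estimates uniform in $m$ (or at least valid at one carefully isolated value) is a secondary difficulty, since the phenomenon is expected only for a restricted range of masses.
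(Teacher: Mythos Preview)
The paper does not prove this statement at all: it is presented as a \emph{conjecture}, supported solely by the numerical shape optimisation in Figures~\ref{third_eigenvalue_mass_1} and~\ref{third_eigenvalue_mass_1_mink} and by Table~\ref{tab_eigenvalues_unit_area_disk}. Your proposal therefore goes strictly beyond the paper, attempting a rigorous certificate where the paper offers only non-validated numerics. That is a legitimate and interesting programme, but there is no ``paper's own proof'' to match.

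There is, however, a genuine gap in your variational step. The operator $H$ has spectrum symmetric about~$0$ via the charge conjugation $u=\begin{psmallmatrix}u_1\\u_2\end{psmallmatrix}\mapsto\begin{psmallmatrix}\bar u_2\\\bar u_1\end{psmallmatrix}$, so every eigenvalue of $H^2$ arises with multiplicity at least two: the ordered spectrum of $H^2$ is $\lambda_1^2,\lambda_1^2,\lambda_2^2,\lambda_2^2,\lambda_3^2,\lambda_3^2,\dots$. Consequently the min--max over three-dimensional subspaces of the Rayleigh quotient $\|H u\|^2/\|u\|^2$ yields $\lambda_2(\Omega_0)^2$, not $\lambda_3(\Omega_0)^2$; your displayed identity is false as written, and a three-dimensional trial space can never certify an upper bound on $\lambda_3$. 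To reach $\lambda_3$ you must either use a five-dimensional trial space for $H^2$, or restrict the quadratic form to a symmetry-adapted subspace (e.g.\ the positive spectral subspace of $H$, or a sector selected by a discrete symmetry of $\Omega_0$) on which the doubling disappears --- neither of which is indicated in the proposal. Once this is repaired, the remaining obstacles you identify (the smallness of the margin $\lambda_3(\mathbb{D})-\lambda_3(\Omega^\star)\approx 0.11$ at $m=1$, the boundary penalty $m\|\gamma u\|^2$, and the need for validated Bessel-root enclosures on the disk side) are real; the strategy is sound in outline but the trial-function construction will be substantially harder than in Theorem~\ref{Thm}, where only a single Rayleigh quotient had to be controlled.
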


\begin{figure}[h]
  \centering
  \begin{minipage}{.5\textwidth}
    \centering
    \includegraphics[width=0.9\linewidth]{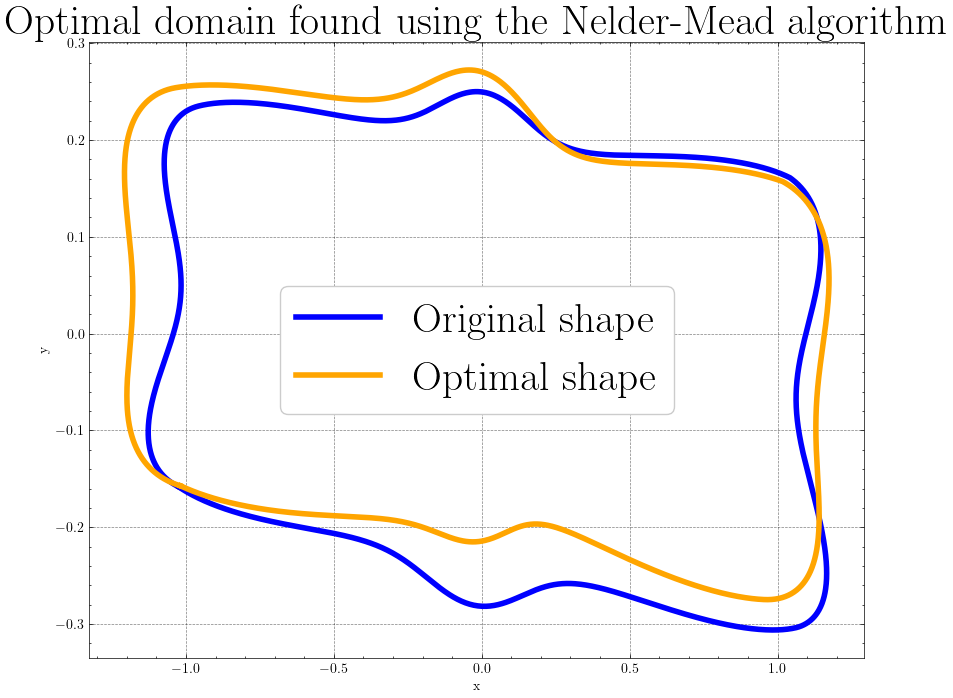}
    \captionsetup{width=0.9\linewidth}
    \captionof{figure}{Original domain used in the first iteration of the Nelder--Mead method and the optimal domain \(\Omega^\star\) found, with \(\lambda_3(\Omega^\star) \approx 5.62889189\).}
    \label{third_eigenvalue_mass_1}
  \end{minipage}%
  \begin{minipage}{.5\textwidth}
    \centering
    \includegraphics[width=1\linewidth]{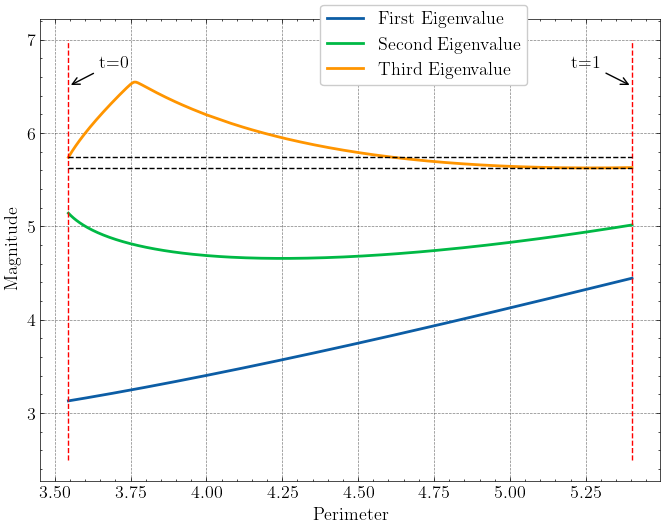}
    \captionsetup{width=0.9\linewidth}
    \captionof{figure}{The first three eigenvalues of each \(\Omega(t), \, t \in [0, 1]\).}
    \label{third_eigenvalue_mass_1_mink}
  \end{minipage}
\end{figure}

\subsection{The ratio of the second to the first eigenvalue}
Motivated by the Ashbaugh--Benguria result~\cite{ash-ben} 
in the non-relativistic setting, 
we also investigate the relativistic ratio \(\frac{\lambda_2}{\lambda_1}\). 
Figures~\ref{ab_plot} and~\ref{ab_m_5plot}
show the optimal domain which maximises the ratio \(\frac{\lambda_2}{\lambda_1}\), for which we obtained the optimal values of \(1.6428571\) and \(1.242146\) for \(m=1\) and \(m=5\), respectively. These results support the conjecture that the maximiser shall be the disk for which we have \(\frac{\lambda_2(\mathbb{D})}{\lambda_1(\mathbb{D})} \approx 1.642860118\) and \(\frac{\lambda_2(\mathbb{D})}{\lambda_1(\mathbb{D})} \approx 1.242146\) for \(m=1\) and \(m=5\), respectively.

\begin{figure}[h]
  \centering
  \begin{minipage}{.5\textwidth}
    \centering
    \includegraphics[width=0.75\linewidth]{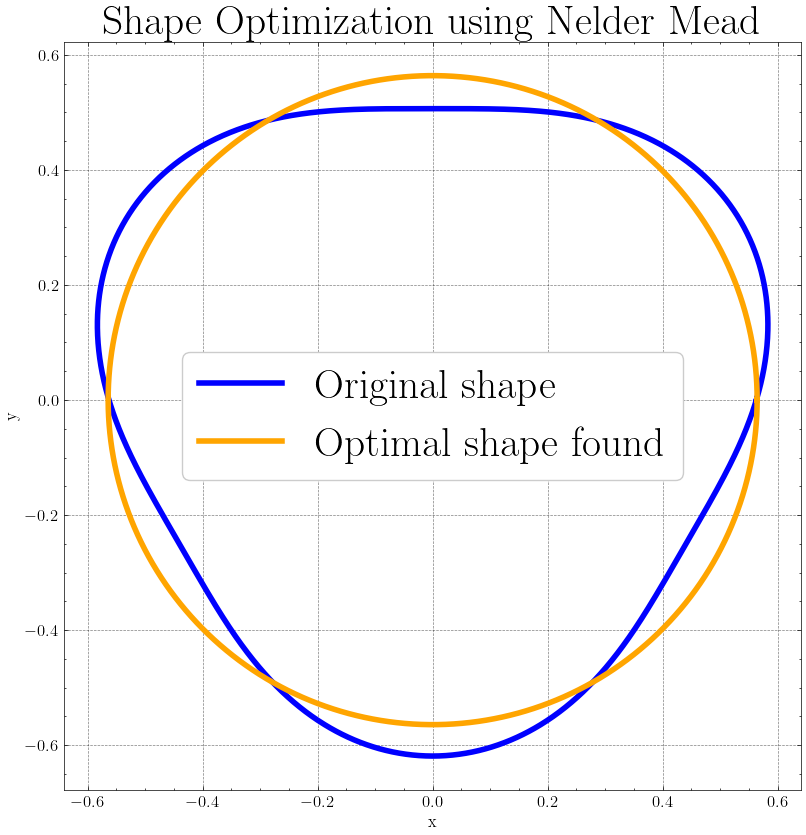}
    \captionsetup{width=0.8\linewidth}
    \captionof{figure}{Original domain used in the first iteration of the Nelder--Mead method and the optimal domain \(\Omega^\star\) found, with \(\frac{\lambda_2(\Omega^\star)}{\lambda_1(\Omega^\star)} \approx 1.6428571\) for \(m=1\).}
    \label{ab_plot}
  \end{minipage}%
  \begin{minipage}{.5\textwidth}
    \centering
    \includegraphics[width=0.775\linewidth]{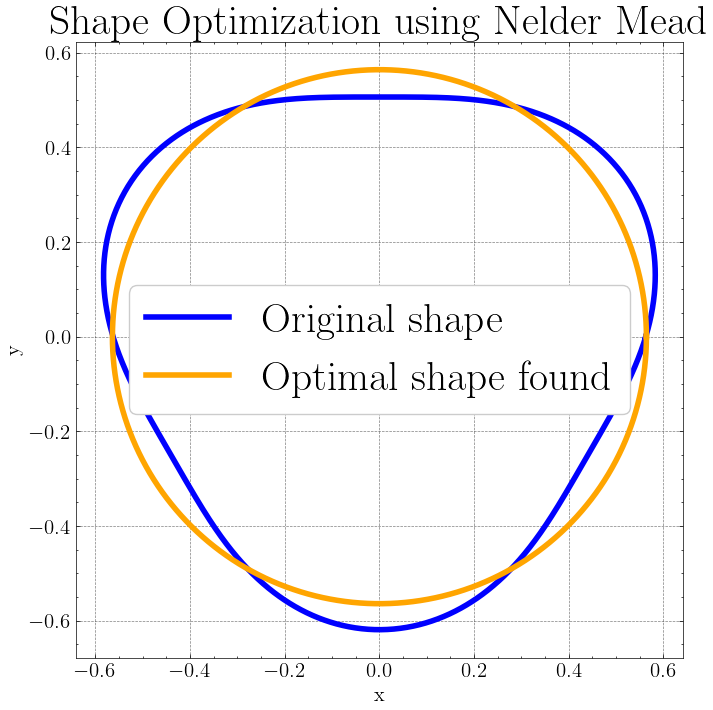}
    \captionsetup{width=0.8\linewidth}
    \captionof{figure}{Original domain used in the first iteration of the Nelder--Mead method and the optimal domain \(\Omega^\star\) found, with \(\frac{\lambda_2(\Omega^\star)}{\lambda_1(\Omega^\star)} \approx 1.242146\) for \(m=5\).}
    \label{ab_m_5plot}
  \end{minipage}
\end{figure}

\begin{conj}\label{ab_conj}
  The disk is the solution of the maximisation problem
  \[
    \max\left\{\frac{\lambda_2(\Omega)}{\lambda_1(\Omega)}:
    \ \Omega \subset \mathbb{R}^2\ \text{open and bounded}\right\}.
  \]
\end{conj}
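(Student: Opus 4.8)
For $m>0$ the maximisation is understood, as in the numerics, among sets of a fixed area (for $m=0$ the ratio $\lambda_2/\lambda_1$ is scale invariant, so no normalisation is needed). The natural line of attack is to adapt the Ashbaugh--Benguria scheme~\cite{ash-ben}, and the plan is as follows. First I would replace~$H$ by its square~$H^2$, a non-negative operator with eigenvalues $\lambda_1^2\le\lambda_2^2\le\dots$ and, by the analogue of~\eqref{non-trivial} valid on any bounded Lipschitz set~$\Omega$, with quadratic form
\[
  h[u]=\|\nabla u\|_{L^2(\Omega)}^2+m^2\|u\|_{L^2(\Omega)}^2+m\,\|u\|_{L^2(\partial\Omega)}^2
\]
on the $H^1(\Omega;\mathbb{C}^2)$ spinors obeying~\eqref{bc}. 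Assume $\lambda_1$ is simple, let $u^{(1)}$ be its eigenfunction, and set $\rho:=|u_1^{(1)}|^2+|u_2^{(1)}|^2\ge 0$. The observation that makes the method applicable is that multiplication by a \emph{real scalar} function~$\phi$ preserves~\eqref{bc} --- it commutes with the matrix $i(n_1+in_2)$ --- so $\phi\,u^{(1)}$ is an admissible trial function. Testing the eigenvalue identity for $u^{(1)}$ against $\phi^2u^{(1)}$ and expanding $h[\phi u^{(1)}]$, the mass term and the boundary term cancel and one is left with the same identity as for the Dirichlet Laplacian, $h[\phi u^{(1)}]=\lambda_1^2\int_\Omega\phi^2\rho+\int_\Omega|\nabla\phi|^2\rho$. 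Hence, for every real $\phi$ with $\int_\Omega\phi\,\rho=0$,
\[
  \lambda_2^2-\lambda_1^2\;\le\;\frac{\int_\Omega|\nabla\phi|^2\,\rho}{\int_\Omega\phi^2\,\rho}\,,
\]
so the problem reduces to a weighted one-dimensional estimate with weight~$\rho$, into which the mass now enters only through $\lambda_1$ and $\rho$.

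Next I would make the Ashbaugh--Benguria choice of trial functions: after translating~$\Omega$ so that $\int_\Omega x_i\,\rho\,dx=0$, take $\phi_i(x)=g(|x|)\,x_i/|x|$, $i=1,2$, where $g$ is the radial profile built from the first two eigenfunctions of~$H$ on the disk $\mathbb{D}^*$ of the same area so as to make the bound above an equality there (extended by a constant beyond the radius of~$\mathbb{D}^*$). A Brouwer fixed-point argument on the location of the origin provides $\int_\Omega\phi_i\,\rho=0$ for $i=1,2$ simultaneously, using that $g(r)/r$ has a positive limit at $r=0$. Adding the two inequalities and using $|\nabla\phi_1|^2+|\nabla\phi_2|^2=g'(|x|)^2+g(|x|)^2/|x|^2$ in the plane, one obtains
\[
  \lambda_2^2-\lambda_1^2\;\le\;\frac{\int_\Omega\bigl(g'(|x|)^2+g(|x|)^2/|x|^2\bigr)\,\rho(x)\,dx}{\int_\Omega g(|x|)^2\,\rho(x)\,dx}\,.
\]

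It then remains to bound the right-hand side by its value on~$\mathbb{D}^*$, namely $\lambda_2(\mathbb{D}^*)^2-\lambda_1(\mathbb{D}^*)^2$. Classically this uses two ingredients: (a) a radial-ODE analysis showing that $g$ is nondecreasing and that $g'^2+g^2/r^2$ is suitably dominated by $g^2$ through the radial equation (the monotonicity/$P$-function lemma), which here concerns the radial Dirac system and I would expect to go through by the Bessel-function manipulations already seen for the disk in Section~\ref{Sec.method}; and (b) a rearrangement comparison between the weight~$\rho$ and the squared ground state on~$\mathbb{D}^*$, together with a Chiti-type inequality, to replace~$\rho$ by its radial competitor. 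Step~(b) is the fundamental obstacle: symmetrisation is unavailable for the Dirac operator, the density $\rho=|u_1^{(1)}|^2+|u_2^{(1)}|^2$ solves no scalar PDE amenable to a Talenti-type comparison, and the inequality one would need is essentially a mass-dependent strengthening of question~\hyperref[Q1]{\textbf{(Q1)}}, which is itself open. Thus the scheme above should reduce Conjecture~\ref{ab_conj} to a Faber--Krahn-type rearrangement statement for the Dirac ground state rather than settle it outright; supplying step~(b), or finding an argument that dispenses with rearrangement altogether, is where the genuine difficulty lies.
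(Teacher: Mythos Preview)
The statement is labelled a \emph{conjecture} in the paper and is supported there only by numerical shape optimisation (Figures~\ref{ab_plot} and~\ref{ab_m_5plot}); the paper offers no analytic proof to compare against. Your write-up is likewise not a proof but a strategy, and you say so yourself in the last paragraph.

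That said, the first half of your outline is sound and worth recording. The key algebraic point --- that multiplication by a real scalar~$\phi$ preserves the infinite-mass boundary condition~\eqref{bc}, and that the resulting gap inequality
\[
  \lambda_2^2-\lambda_1^2\;\le\;\frac{\int_\Omega|\nabla\phi|^2\,\rho}{\int_\Omega\phi^2\,\rho}
  \qquad\text{with}\quad \rho=|u_1^{(1)}|^2+|u_2^{(1)}|^2
\]
holds whenever $\int_\Omega\phi\,\rho=0$ --- is correct. The cleanest justification is to write $H(\phi u^{(1)})=\lambda_1\phi u^{(1)}+[H,\phi]u^{(1)}$, note that $\|[H,\phi]u^{(1)}\|^2=\int_\Omega|\nabla\phi|^2\rho$, and check that the cross term $\langle\phi u^{(1)},[H,\phi]u^{(1)}\rangle$ is purely imaginary for real~$\phi$; the mass and boundary contributions then drop out exactly as you claim, without having to track the boundary integral coming from~\eqref{non-trivial} directly.

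The gap you flag in step~(b) is precisely the obstruction. A Chiti-type comparison for the spinorial density~$\rho$ has no known analogue, and establishing one would in particular subsume the open Faber--Krahn question~\hyperref[Q1]{\textbf{(Q1)}}. So your reduction is honest but does not move past what the paper already identifies in the introduction as the fundamental missing tool (absence of symmetrisation for Dirac). In short: there is nothing in the paper to validate or contradict your approach, because the paper does not attempt a proof; your sketch correctly isolates where any Ashbaugh--Benguria-style argument currently stalls.
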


\subsection{The ratio of the third to the first eigenvalue}
Finally, we study the maximisation of the ratio \(\frac{\lambda_3}{\lambda_1}\). Figures \ref{l-3_l-1_plot} and \ref{l-3_l-1_m_5_plot} show the optimal domain which maximises this ratio for \(m=1\) and \(m=5\), respectively. The ratio for the disk~$\mathbb{D}$ with unit area is approximately \(\frac{\lambda_3(\mathbb{D})}{\lambda_1(\mathbb{D})} \approx 1.834112925\) and \(\frac{\lambda_3(\mathbb{D})}{\lambda_1(\mathbb{D})} \approx 1.2671758\) for \(m=1\) and \(m=5\), respectively. 
The results for the Dirac operator with infinite mass boundary conditions are similar to the ones for the Dirichlet Laplacian, since it is also known that this ratio is not maximised for the disk and the optimal domain is a peanut-like shape (see, e.g., \cite{Ant,Levitin,osting}).

\begin{figure}[h]
  \centering
  \begin{minipage}{.5\textwidth}
    \centering
    \includegraphics[width=0.75\linewidth]{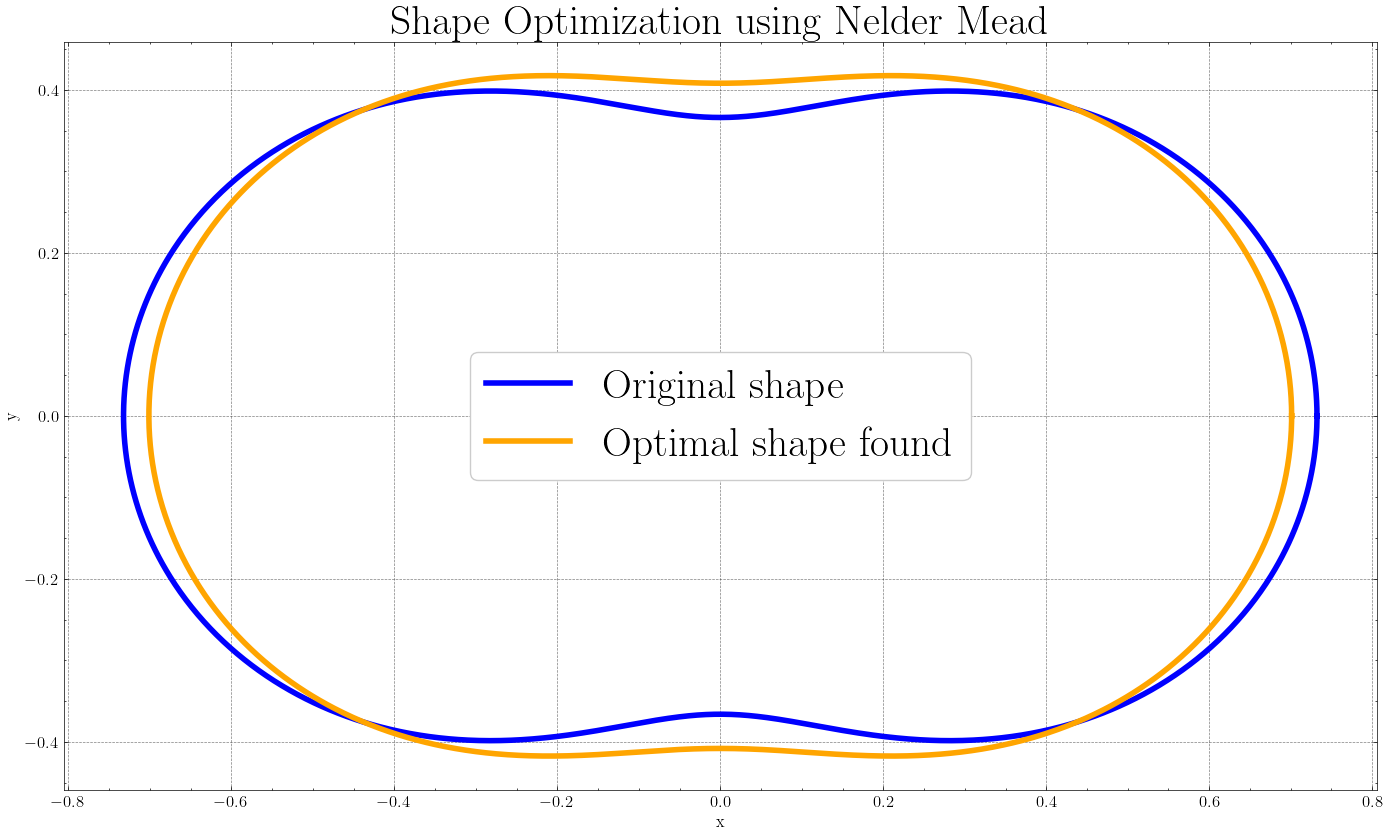}
    \captionsetup{width=0.9\linewidth}
    \captionof{figure}{Original domain used in the first iteration of the Nelder--Mead method and the optimal domain \(\Omega^\star\) found, with \(\frac{\lambda_3(\Omega^\star)}{\lambda_1(\Omega^\star)} \approx 2.0056993\), for \(m=1\).}
    \label{l-3_l-1_plot}
  \end{minipage}%
  \begin{minipage}{.5\textwidth}
    \centering
    \includegraphics[width=0.75\linewidth]{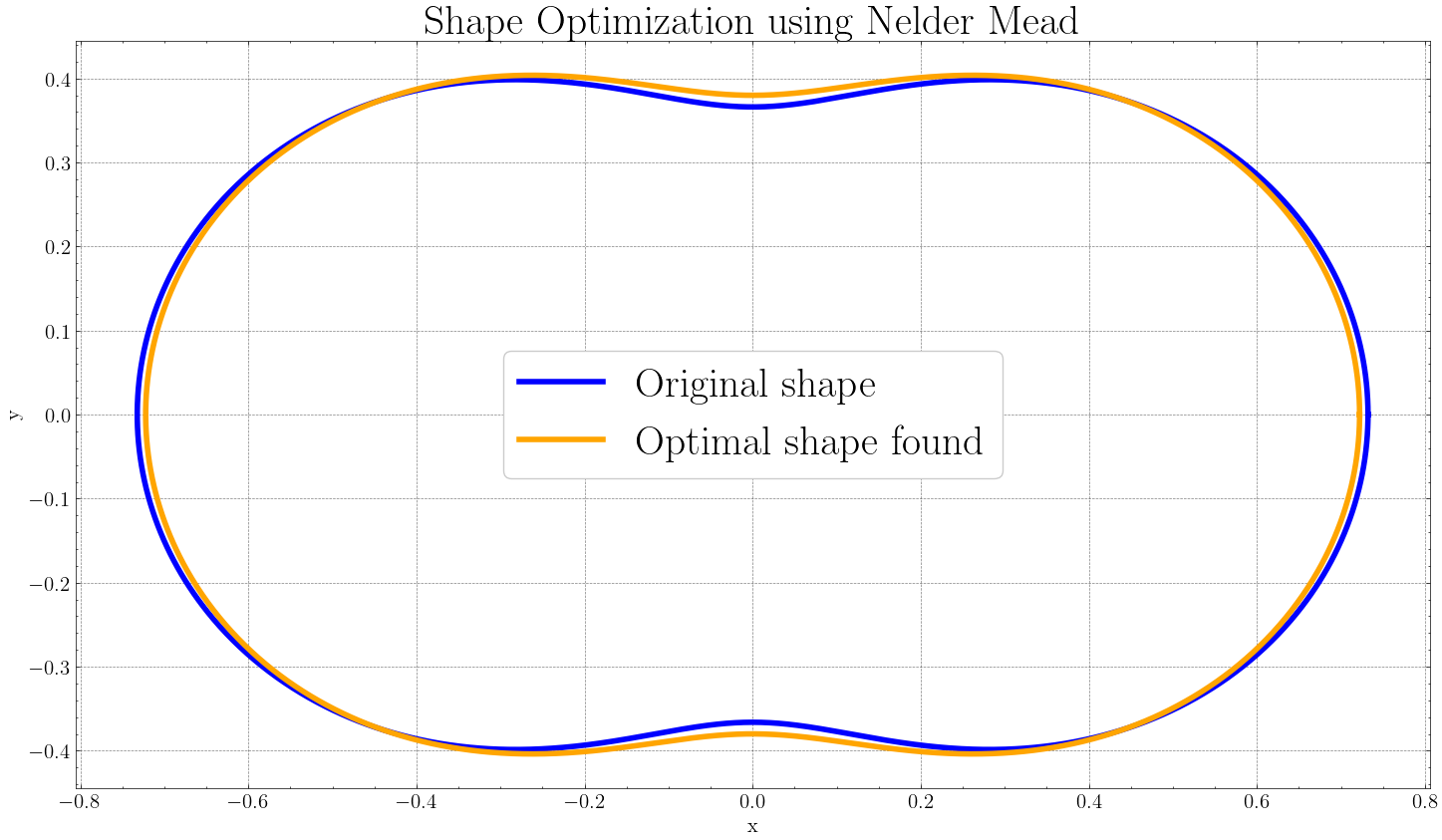}
    \captionsetup{width=0.9\linewidth}
    \captionof{figure}{Original domain used in the first iteration of the Nelder--Mead method and the optimal domain \(\Omega^\star\) found, with \(\frac{\lambda_3(\Omega^\star)}{\lambda_1(\Omega^\star)} \approx 1.386284\), for \(m=5\).}
    \label{l-3_l-1_m_5_plot}
  \end{minipage}
\end{figure}

\subsection*{Acknowledgment}
We are grateful to Lo\"ic Le Treust for the idea 
leading to Theorem~\ref{Thm}. P.A. was partially supported by FCT, Portugal, through the scientific project UIDB/00208/2020.
D.K.\ was supported
by the EXPRO grant No.~20-17749X
of the Czech Science Foundation.

\newpage


\end{document}